\newtheorem{theorem}{Theorem}[section]
\newtheorem*{theorem-non}{Theorem}
\newtheorem{lemma}[theorem]{Lemma}
\newtheorem{corollary}[theorem]{Corollary}
\newtheorem{proposition}[theorem]{Proposition}
\newtheorem{conjecture}[theorem]{Conjecture}
\theoremstyle{definition}
\newtheorem{definition}[theorem]{Definition}
\newtheorem{example}[theorem]{Example}
\theoremstyle{remark}
\numberwithin{equation}{section}
\newcommand{\Z}{\mathbb{Z}}
\newcommand{\braid}{\operatorname{Braid}}
\newcommand{\tbi}{\operatorname{tbi}}
\newcommand{\var}{\operatorname{Var}}
\newcommand{\sign}{\operatorname{sign}}
\definecolor{blue}{rgb}{0,0,1}
\title{The distribution of braid indices of 2-bridge knots}
\author{Tobias Clark, Jeremy Frank, Adam M. Lowrance}
\begin{document}

\begin{abstract}
In this article we study the braid indices of 2-bridge knots with a fixed crossing number $c$. We show that the average braid index of the set of $2$-bridge knots of crossing number $c$ is asymptotically linear, approaching $\frac{c}{3}+\frac{11}{9}$. Additionally, we show that the variance of the braid indices of the set of $2$-bridge knots of crossing number $c$ is also asymptotically linear, approaching $\frac{2c}{27} - \frac{10}{81}$. Finally, we find a formula for the number $k_{c,b}$ of $2$-bridge knots with crossing number $c$ and braid index $b$, and show that for any fixed $c$, the braid index where $k_{c,b}$ achieves its maximum is $b=\left\lceil \frac{c}{3}\right\rceil +1$. 
\end{abstract}

\maketitle

\section{Introduction}
\label{sec:intro}

A \textit{2-bridge knot} is a nontrivial knot with a representative such that projection to one coordinate yields only two maxima and two minima as critical points. Every $2$-bridge knot is the closure of a rational tangle, and Schubert \cite{Schubert} classified $2$-bridge knots in terms of their rational tangle representations. Recent work \cite{BKLMR, CohenBound, SuzTran, RayDiao, CoLow, CoLow2} has explored properties of the probability distribution of the genera of $2$-bridge knots with fixed crossing number $c$. In this article, we apply techniques from these sources to study the probability distribution of braid indices of $2$-bridge knots.

Alexander \cite{Alexander} proved that every knot can be represented as the closure of a braid. The \textit{braid index} of a knot is the minimum number of strands of any braid whose closure is the knot. Murasugi \cite{Murasugi} computed the braid index of a $2$-bridge knot; see Theorem \ref{thm:braid} for a precise statement.

Define $\mathcal{K}_c$ to be the set of $2$-bridge knots of crossing number $c$ where only one of a knot and its mirror image is in $\mathcal{K}_c$. For example, $|\mathcal{K}_3|=1$ because only one of the left-handed or right-handed trefoil is in $\mathcal{K}_3$. Define $\mathcal{K}_{c,b}$ to be the subset of $\mathcal{K}_c$ consisting of knots of braid index $b$, and let $k_{c,b}=|\mathcal{K}_{c,b}|$. Our first result gives a formula for the number $k_{c,b}$ of $2$-bridge knots with crossing number $c$ and braid index $b$.
\begin{theorem}
    \label{thm:number}
    The number $k_{c,b}$ of $2$-bridge knots with crossing number $c$ and braid index $b$ is
    \[k_{c,b} = \begin{cases}
        1& \text{if $3\leq c$, $c$ is odd, and $b=2$,}\\
        2^{b-4}\binom{c-b}{b-2} & \text{if $3\leq c$, $3\leq b \leq \left\lceil\frac{c+1}{2}\right\rceil$, and $c+b$ is even,}\\
        2^{b-4}\binom{c-b}{b-2} + 2^{\frac{b-5}{2}}\binom{\frac{c-b-1}{2}}{\frac{b-3}{2}}& \text{if $3\leq c$, $3\leq b \leq \left\lceil\frac{c+1}{2}\right\rceil$, $c$ is even, and $b$ is odd,}\\
        2^{b-4}\binom{c-b}{b-2} + 2^{\frac{b-4}{2}}\binom{\frac{c-b-1}{2}}{\frac{b-2}{2}}& \text{if $3\leq c$, $3\leq b \leq \left\lceil\frac{c+1}{2}\right\rceil$, $c$ is odd, and $b$ is even,}\\
        0& \text{otherwise.}
        \end{cases}\]
\end{theorem}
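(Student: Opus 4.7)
The plan is to parametrize $\mathcal{K}_c$ by continued fraction data, translate the braid index condition via Murasugi's formula (Theorem~\ref{thm:braid}), and then enumerate the resulting sequences using Burnside to account for the reversal symmetry that identifies mirror pairs. First I would use Schubert's classification to identify $\mathcal{K}_c$ with reversal-orbits of continued fractions $(a_1,\ldots,a_n)$ satisfying $a_i\geq 1$, $\sum_i a_i=c$, and the parity condition distinguishing knots from links. The case $b=2$ is handled directly: among 2-bridge knots only the $(2,c)$-torus knot has braid index $2$, and it exists precisely when $c$ is odd, which gives the first case of the formula.

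Next I would apply Theorem~\ref{thm:braid} to pin down which sequences produce braid index $b$. Murasugi's formula expresses $b$ as a structural statistic of the sequence (its length together with a count of interior entries of a prescribed parity), so fixing $b$ restricts attention to compositions of $c$ into $b-1$ parts each at least $2$, counted by $\binom{c-b}{b-2}$, decorated by $2^{b-3}$ binary choices on the interior positions that encode the Murasugi statistic. By Burnside applied to the reversal involution, the number of reversal-orbits equals half of the total number of such sequences plus half of the number of sequences fixed by reversal, which after simplification gives the main term $2^{b-4}\binom{c-b}{b-2}$ plus half the palindrome count.

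Finally I would enumerate the palindromic sequences in each parity regime. When $c+b$ is even, a direct parity check shows that no palindromic sequence of the required length and sum can carry the Murasugi statistic equal to $b$, so there is no correction and the main term is exact. When $c$ is even and $b$ is odd, or $c$ is odd and $b$ is even, palindromes exist and are determined by their first half (together with a center entry in the odd-length case), and enumerating them as compositions of essentially $(c-b-1)/2$ into either $(b-3)/2$ or $(b-2)/2$ parts, with the appropriate number of inherited binary Murasugi decorations on the halved sequence, produces the correction terms $2^{(b-5)/2}\binom{(c-b-1)/2}{(b-3)/2}$ and $2^{(b-4)/2}\binom{(c-b-1)/2}{(b-2)/2}$ respectively. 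The main obstacle is this palindromic case: matching the parity of the central entry, the reduced length of the halved sequence, and the correct exponent of $2$ for the surviving binary decorations in each mixed-parity regime requires careful bookkeeping, and it is also where the upper bound $b\leq\lceil(c+1)/2\rceil$ becomes sharp.
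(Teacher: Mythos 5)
Your high-level architecture matches the paper's: count continued-fraction representations realizing the pair $(c,b)$, divide by the size of the symmetry orbit, and add a correction for representations fixed by a symmetry. But two essential pieces are missing or misstated. First, the symmetry bookkeeping is off as written. By Theorem \ref{thm:even}, a knot together with its mirror generically has \emph{four} even continued fraction representations, forming an orbit under the group generated by reversal \emph{and} negation; the correct count is $k_{c,b}=\tfrac14\left(e(c,b)+e_p(c,b)\right)$, where $e_p$ counts the tuples fixed by reversal or by reversal-plus-negation (palindromes \emph{and} anti-palindromes). Your phrase ``the reversal symmetry that identifies mirror pairs'' conflates two different symmetries: reversal relates representations within a chiral pair's orbit, while negation is what produces the mirror. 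A Burnside argument over the order-two reversal group alone is only valid if your parametrizing set has already been quotiented by negation/mirroring; the arithmetic can be made to come out right (your total $2^{b-3}\binom{c-b}{b-2}$ is exactly $\tfrac12 e(c,b)$), but you never construct such a set, and in the naive positive-continued-fraction model with $a_i\geq 1$ and $\sum a_i=c$ each chiral pair again corresponds to up to four sequences, so dividing by two double-counts.

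Second, and more seriously, the central enumeration is asserted rather than derived. The claim that the relevant representations are ``compositions of $c$ into $b-1$ parts each at least $2$, decorated by $2^{b-3}$ binary choices encoding the Murasugi statistic'' \emph{is} the main term of the theorem, and nothing in your sketch extracts it from Theorem \ref{thm:braid}, which computes $b=\sum|a_i|-\ell+1$ from the signed even continued fraction --- not from a parity count on interior entries of a positive continued fraction (that alternative is the Diao--Ernst--Hetyei formula, which the paper explicitly does not use and which you would have to state and prove separately). The paper does this work in Proposition \ref{prop:ecb}: it partitions $E(c,b)$ into six subsets according to the final entries, exhibits bijections to smaller $E(c',b')$, and inducts on the recursion $e(c,b)=e(c-2,b)+2e(c-2,b-1)+2e(c-3,b-1)$ to get $e(c,b)=2^{b-2}\binom{c-b}{b-2}$; Proposition \ref{prop:ep} runs a parallel three-part partition for the (anti-)palindromic count. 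Your plan defers exactly these two computations --- the decorated-composition count and the palindrome/anti-palindrome count --- to ``careful bookkeeping,'' so the proof's essential content is not yet present.
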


The \textit{mode} of a finite sequence $(a_1,\dots,a_n)$ of integers is an index $m$ where $a_m\geq a_i$ for all 
$i$ with $1\leq i \leq n$. In our next result, we find the mode of the sequence $(k_{c,b})_{b=2}^n$ where $n=\left\lceil \frac{c+1}{2}\right\rceil$ for each fixed crossing number.
\begin{theorem}
\label{thm:mode}
Let $c\geq 3$, and let $n=\left\lceil\frac{c+1}{2}\right\rceil$. The mode of the sequence $(k_{c,b})_{b=2}^n=(k_{c,2},k_{c,3},\dots,k_{c,n})$ of the number of 2-bridge knots with crossing number $c$ and braid index $b$ is $b=\left\lceil \frac{c}{3}\right\rceil +1.$
\end{theorem}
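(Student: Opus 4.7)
\emph{Proof plan.} My approach is to decompose $k_{c,b} = f(b) + g(b)$ for $b \geq 3$, where $f(b) = 2^{b-4}\binom{c-b}{b-2}$ is the main term and $g(b) \geq 0$ is the correction (which vanishes when $c+b$ is even and is the second binomial summand of Theorem \ref{thm:number} otherwise). First I would show that $f$ is unimodal with mode at $b^* = \lceil c/3 \rceil + 1$, and then argue that $g$ is too small to shift this mode; the boundary value $b = 2$ and small $c$ are handled directly from the formula.

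For the first step, I would compute
\[\frac{f(b+1)}{f(b)} = \frac{2(c-2b+2)(c-2b+1)}{(c-b)(b-1)},\]
observe that this ratio is monotonically decreasing in $b$ on the relevant range, and verify in each of the three cases $c \equiv 0, 1, 2 \pmod 3$ that it is at least $1$ for $b \leq \lceil c/3 \rceil$ and strictly less than $1$ at $b = b^*$. This yields that $f$ is unimodal on $[3, n]$ with mode $b^*$.

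For the second step, I would use the fact that $g$ alternates between zero and positive as $b$ varies, so exactly one of $g(b), g(b+1)$ vanishes for each consecutive pair with $b \geq 3$. The sign of $k_{c,b+1} - k_{c,b}$ then splits into four cases depending on whether $b < b^*$ or $b \geq b^*$ and on which of $g(b), g(b+1)$ is nonzero. In two of these cases the correction pushes in the desired direction, and the sign of $k_{c,b+1} - k_{c,b}$ is inherited from $f$. The other two cases require the sharper inequalities $f(b+1) - f(b) > g(b)$ for $b < b^*$ with $g(b) > 0$, and $f(b) - f(b+1) > g(b+1)$ for $b \geq b^*$ with $g(b+1) > 0$.

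The main obstacle will be establishing these two sharper inequalities. Heuristically they hold with room to spare: $g(b)/f(b)$ is exponentially small in $c$ near $b^*$ (roughly of size $2^{-c/2}$ by a Stirling estimate), while $|1 - f(b+1)/f(b)|$ is only of polynomial order $1/c$ there. To make this rigorous I would clear denominators and reduce each inequality to a polynomial inequality in $c$ and $b$, verifiable by direct algebraic manipulation for $b$ in the relevant range and $c$ sufficiently large, with the remaining small values of $c$ checked directly against the formula of Theorem \ref{thm:number}.
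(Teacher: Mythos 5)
Your proposal is essentially the paper's proof in slightly different clothing: your $f(b)$ and $g(b)$ are exactly $\tfrac{1}{4}e(c,b)$ and $\tfrac{1}{4}e_p(c,b)$ in the paper's notation, and your first step (the ratio $f(b+1)/f(b)=\frac{2(c-2b+2)(c-2b+1)}{(c-b)(b-1)}$ is decreasing and crosses $1$ between $\left\lceil\frac{c}{3}\right\rceil$ and $b^*=\left\lceil\frac{c}{3}\right\rceil+1$) is precisely the content of Lemmas \ref{lem:logconcave} and \ref{lem:emode}. The one genuine divergence is in how the palindromic correction is controlled. You propose the local inequalities $f(b+1)-f(b)\geq g(b)$ for $b<b^*$ and $f(b)-f(b+1)\geq g(b+1)$ for $b\geq b^*$ at every $b$ where the relevant $g$ is positive; this is more than the theorem asks for and would establish unimodality of $(k_{c,b})_b$ itself. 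The paper instead uses the single crude bound $e_p(c,b)-e_p(c,b^*)\leq e_p(c)$ and compares $e_p(c)$ against only the two differences $e(c,b^*)-e\left(c,b^*\pm 1\right)$, which by unimodality of $e(c,\cdot)$ already minorize $e(c,b^*)-e(c,b)$ for all $b$; that needs just two inequalities per residue of $c$ mod $3$ (verified via $\binom{n}{k}\geq(n/k)^k$ for $c\geq 34$, smaller $c$ by direct computation). One caveat on your closing step: clearing denominators does not reduce your sharper inequalities to polynomial inequalities, since the mismatched factors $2^{b}$ versus $2^{b/2}$ and the binomial coefficients survive; you would still need binomial estimates of the kind the paper uses, and a finite check for small $c$ in any case. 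Your heuristic that $g/f$ is exponentially small while the relative increments of $f$ are only polynomially small is correct, so the plan is sound, but the paper's global comparison reaches the stated (weaker) conclusion with less work.
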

In Conjecture \ref{conj:median}, we conjecture that the median braid index is also $b=\left\lceil \frac{c}{3}\right\rceil +1.$

We also find the average and the variance of the braid indices of $2$-bridge knots with a fixed crossing number $c$. Suzuki and Tran \cite{SuzTran2} independently computed the average braid index. Define the discrete random variable $\braid_c:\mathcal{K}_c\to\mathbb{Z}$ by setting $\braid_c(K)$ be the braid index of the $2$-bridge knot $K$. We call $\braid_c$ the \textit{braid index of 2-bridge knots with crossing number $c$ random variable}, or just the \textit{braid index random variable}, for short. The average value of the braid indices of all $2$-bridge knots with crossing number $c$ is the expected value $E(\braid_c)$ of the braid index random variable. 
\begin{theorem}
\label{thm:average}
    The average braid index $E(\braid_c)$ of $2$-bridge knots with crossing number $c$ is
    \[E(\braid_c) = \frac{c}{3}+\frac{11}{9} + \begin{cases}
        \frac{2^{\frac{c}{2}}+9c-16}{9\left(2^{c-2}+2^{\frac{c-2}{2}}\right)}& \text{if $c\equiv 0$ mod $4$,}\\
       \frac{19-9c-2^{\frac{c+3}{2}}}{9\left(2^{c-2}+2^{\frac{c-1}{2}}\right)}&\text{if $c\equiv 1$ mod $4$,}\\
       \frac{2^{\frac{c}{2}}+3c-8}{9\left(2^{c-2}+2^{\frac{c-2}{2}}-2\right)} & \text{if $c\equiv 2$ mod $4$,}\\
       \frac{5-3c-2^{\frac{c+3}{2}}}{9\left( 2^{c-2} + 2^{\frac{c-1}{2}}+2\right)}& \text{if $c\equiv 3$ mod $4$.}
       \end{cases}\]
Hence the average braid index $E(\braid_c)$ of $2$-bridge knots with crossing number $c$ approaches $\frac{c}{3}+\frac{11}{9}$ as $c\to\infty$.
\end{theorem}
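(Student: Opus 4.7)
The plan is to compute $E(\braid_c) = S_1(c)/S_0(c)$, where $S_0(c) = \sum_b k_{c,b} = |\mathcal{K}_c|$ and $S_1(c) = \sum_b b\,k_{c,b}$, by plugging in the formula from Theorem \ref{thm:number} and evaluating the resulting binomial sums in closed form. The key observation is that each $k_{c,b}$ decomposes as the principal term $2^{b-4}\binom{c-b}{b-2}$, which is present for every $b\ge 3$, plus a parity-dependent correction term that appears only when $c+b$ is odd, plus the isolated contribution $k_{c,2}=1$ when $c$ is odd. Hence both $S_0(c)$ and $S_1(c)$ split as a sum of a principal piece $\sum_{b\ge 3} b^{j}\,2^{b-4}\binom{c-b}{b-2}$ for $j=0,1$, a correction piece of the same shape but with $c$ essentially replaced by $c/2$ or $(c-1)/2$, and the explicit $b=2$ contribution.

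Next I reduce everything to two master sums
\[
f_n(2)=\sum_k\binom{n-k}{k}2^k, \qquad g_n=\sum_k k\binom{n-k}{k}2^k.
\]
Starting from the generating function $\sum_n f_n(x)\,y^n = (1-y-xy^2)^{-1}$ and factoring $1-y-2y^2=(1-2y)(1+y)$, I obtain the closed forms
\[
f_n(2)=\tfrac{1}{3}\bigl(2^{n+1}+(-1)^n\bigr), \qquad g_n=\tfrac{1}{27}\bigl(2^{n+1}(3n-1)+2(-1)^n(3n+1)\bigr),
\]
the second by differentiating the generating function in $x$ and applying partial fractions at the two double poles $y=1/2$ and $y=-1$. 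After a change of variables $k=b-2$, the principal sums become $\tfrac{1}{4}(f_{c-2}(2)-1)$ and a linear combination of $g_{c-2}$ and $f_{c-2}(2)$. The correction sums have exactly the same shape with $n$ replaced by $\lfloor(c-3)/2\rfloor$ or $\lfloor(c-4)/2\rfloor$, so they are evaluated by the same two formulas.

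Finally, I assemble $S_0(c)$ and $S_1(c)$ in each of the four residue classes $c\bmod 4$, because the signs $(-1)^c$, $(-1)^{\lfloor c/2\rfloor}$ governing the correction terms depend on $c\bmod 4$. A direct computation produces
\[
S_0(c)=\frac{2^{c-1}+2^{\lceil c/2\rceil}+\varepsilon_0(c)}{12}, \qquad S_1(c)=\frac{2^{c-1}(3c+11)+\varepsilon_1(c)}{108},
\]
where $\varepsilon_0(c)\in\{0,\pm 4\}$ and $\varepsilon_1(c)$ is a bounded linear combination of $2^{c/2}$, $c\cdot 2^{c/2}$, $(-1)^c$, $c$, and constants. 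Dividing $S_1(c)$ by $S_0(c)$, rewriting $S_1(c)/S_0(c)$ as $\tfrac{c}{3}+\tfrac{11}{9}$ plus a remainder, and simplifying yields the four piecewise expressions in the theorem statement. The asymptotic limit $E(\braid_c)\to c/3+11/9$ is then immediate because every term in $\varepsilon_0$ and $\varepsilon_1$ is of order at most $c\cdot 2^{c/2}$, which is negligible compared to $2^{c-1}$.

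The main obstacle is purely computational, namely the careful bookkeeping through the four parity cases and the tight algebraic simplification required to match the exact closed forms stated in the theorem; no identity beyond the two formulas for $f_n(2)$ and $g_n$ is needed.
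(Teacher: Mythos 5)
Your proposal is correct, and it takes a genuinely different route from the paper. The paper never sums the closed formula for $k_{c,b}$ directly: instead it works with the auxiliary quantities $\tbi(c)=\sum_b b\,e(c,b)$ and $\tbi_p(c)=\sum_b b\,e_p(c,b)$, derives the recursions $\tbi(c)=3\tbi(c-2)+2\tbi(c-3)+2^{c-3}$ and $\tbi_p(c)=\tbi_p(c-2)+2\tbi_p(c-4)+4e_p(c-4)$ from the recursions for $e(c,b)$ and $e_p(c,b)$, and then verifies guessed closed forms by induction before dividing by $e(c)+e_p(c)$. You instead evaluate $\sum_b k_{c,b}$ and $\sum_b b\,k_{c,b}$ in closed form by reducing everything to the Fibonacci-polynomial sums $f_n(2)=\sum_k\binom{n-k}{k}2^k$ and $g_n=\sum_k k\binom{n-k}{k}2^k$, computed from the generating function $(1-y-xy^2)^{-1}$ via the factorization $1-y-2y^2=(1-2y)(1+y)$. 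I checked your two master identities ($f_n(2)=\frac{1}{3}(2^{n+1}+(-1)^n)$ and $g_n=\frac{1}{27}(2^{n+1}(3n-1)+2(-1)^n(3n+1))$) against small cases and against the paper's Corollary \ref{cor:ec}, and they are right; the index shifts you describe for the correction terms (replacing $n$ by $\lfloor(c-3)/2\rfloor$ or $\lfloor(c-4)/2\rfloor$) are also consistent with the parity cases of Theorem \ref{thm:number}. The trade-off: your approach needs the explicit binomial-sum evaluations but produces the closed forms constructively, with no need to conjecture them first; the paper's approach avoids any generating-function machinery but requires knowing the answers in advance and confirming them by induction (a step it explicitly leaves to the reader). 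Both land on the same four-way case split by $c \bmod 4$, which in your version enters through the signs $(-1)^n$ at the shifted indices. The remaining work in your plan is, as you say, bookkeeping, and nothing in it would fail.
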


The variance of the braid indices of all $2$-bridge knots with crossing number $c$ is the variance $\var(\braid_c)$  of the braid index random variable.
\begin{theorem}
    \label{thm:variance}
    The variance $\var(\braid_c)$ of the braid index random variable is
    \[\var(\braid_c) = \frac{2c}{27}- \frac{10}{81} + \varepsilon(c),\]
    where
    \[\varepsilon(c) = 
    \begin{cases}
        \frac{(3c-13)2^{\frac{3c}{2}}+(21c-74)2^c-(42c-40)2^{\frac{c}{2}}-(324c^2-1152c+1024)}{81\left(2^{2c-2}+2^{\frac{3c}{2}}+2^c\right)}&\text{if $c\equiv 0$ mod $4$,}\\
        \frac{(3c-1)2^{\frac{3c+1}{2}}+(15c-13)2^c-(69c-175)2^{\frac{c+3}{2}}-(324c^2-1368c+1444)}{81\left(2^{2c-2}+2^{\frac{3c+1}{2}}+2^{c+1}\right)}&\text{if $c\equiv 1$ mod $4$,}\\
        \frac{(3c-13)2^{\frac{3c}{2}}-(18c^2-105c+142)2^c-(18c^2-75c+28)2^{\frac{c+2}{2}}+(108c^2-600c+640)}{81\left(2^{2c-2}+2^{\frac{3c}{2}}-3\cdot 2^c-2^{\frac{c+6}{2}}+16\right)}
&\text{if $c\equiv 2$ mod $4$,}\\
        \frac{ (3c-1)2^{\frac{3c+1}{2}}+(18c^2-105c+97)2^c +(18c^2-129c+169)2^{\frac{c+3}{2}}+(108c^2-816c+964)}{81\left(2^{2c-2}+2^{\frac{3c+1}{2}}+6\cdot 2^c+4\cdot 2^{\frac{c+3}{2}}+16\right)}&\text{if $c\equiv 3$ mod $4$.}
    \end{cases}\]
    Hence the variance $\var(\braid_c)$ approaches $\frac{2c}{27}-\frac{10}{81}$ as $c\to\infty$.
\end{theorem}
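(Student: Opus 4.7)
The plan is to use the identity $\var(\braid_c) = E(\braid_c^2) - E(\braid_c)^2$. Since Theorem~\ref{thm:average} provides $E(\braid_c)$ and (implicitly through its denominator) $|\mathcal{K}_c|$ in closed form for each residue of $c$ mod $4$, the only new quantity to compute is the second moment
\[E(\braid_c^2) = \frac{1}{|\mathcal{K}_c|}\sum_{b=2}^{\lceil(c+1)/2\rceil} b^{2}\, k_{c,b}.\]

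Substituting the piecewise formula of Theorem~\ref{thm:number} splits $\sum_{b} b^{2} k_{c,b}$ into three pieces: a \emph{main} sum $\sum_{b} b^{2}\cdot 2^{b-4}\binom{c-b}{b-2}$ that appears in every case, a \emph{correction} sum of the form $\sum_{b} b^{2}\cdot 2^{(b-5)/2}\binom{(c-b-1)/2}{(b-3)/2}$ or $\sum_{b} b^{2}\cdot 2^{(b-4)/2}\binom{(c-b-1)/2}{(b-2)/2}$ depending on the parities of $c$ and $b$, and a boundary contribution equal to $4$ from the $b=2$ term when $c$ is odd. Each of these, together with its $b^{0}$ and $b^{1}$ analogues, must be evaluated in closed form; the $b^{0}$ and $b^{1}$ versions reconstruct $|\mathcal{K}_c|$ and cross-check Theorem~\ref{thm:average}.

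The natural tool is generating functions. Direct computation gives $\sum_{b,c}\binom{c-b}{b-2}x^{b}y^{c} = \frac{x^{2}y^{2}}{1-y-xy^{2}}$; applying $x\partial_x$ once or twice and specializing $x=2$ produces closed forms for the main-family moments as linear combinations of $2^{c}$ and $(-1)^{c}$ with polynomial-in-$c$ coefficients, because at $x=2$ the denominator factors as $(1-2y)(1+y)$ and higher derivatives introduce poles of multiplicity $2$ and $3$, contributing the $c$ and $c^2$ factors. The correction family admits the two-variable generating function $\frac{x^{3}y^{4}}{1-y^{2}-x^{2}y^{4}}$ (after reindexing $b=2B+3$, $c=2B+2C+4$); at $x=\sqrt{2}$ the denominator factors as $(1-2y^{2})(1+y^{2})$, so the moments are linear combinations of $(\pm\sqrt{2})^{c}$ and $(\pm i)^{c}$ with polynomial-in-$c$ coefficients, and pairing conjugate terms yields real expressions in $2^{c/2}$ with coefficients periodic of period $4$ in $c$. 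This is exactly what produces the four-case dependence on $c\bmod 4$ rather than on $c\bmod 2$.

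Assembling $\sum_b b^2 k_{c,b}$ from these pieces and subtracting $|\mathcal{K}_c|\cdot E(\braid_c)^{2}$ (via the square of Theorem~\ref{thm:average}) gives a single fraction with denominator $|\mathcal{K}_c|^{2}$, matching the denominators of $\varepsilon(c)$ up to an absolute constant. The $c^{2}$ contributions of $E(\braid_c^2)$ and $E(\braid_c)^{2}$ both equal $c^{2}/9+O(c)$ and cancel exactly, and the surviving linear and constant pieces combine to the asserted main term $\frac{2c}{27}-\frac{10}{81}$. The principal obstacle is purely algebraic: one must track the subleading $2^{c/2}$ and constant pieces of both $E(\braid_c^2)$ and $E(\braid_c)^{2}$ through the subtraction in each of the four residue classes of $c$ modulo $4$, and verify term by term that the residue matches the stated $\varepsilon(c)$. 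Once this identity is established, $\varepsilon(c)\to 0$ is immediate because every term in $\varepsilon(c)$ has denominator of order $2^{2c}$ while its numerator grows at most like $c^{2}\cdot 2^{3c/2}$.
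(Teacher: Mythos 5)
Your proposal is correct in outline and rests on the same skeleton as the paper's proof: write $\var(\braid_c)=E(\braid_c^2)-E(\braid_c)^2$, reduce the second moment to the weighted sums $\sum_b b^2 k_{c,b}$, and split those sums into a ``generic'' part and a ``palindromic correction'' part (your main sum and correction sum are exactly $\tfrac14\sum_b b^2 e(c,b)$ and $\tfrac14\sum_b b^2 e_p(c,b)$, i.e.\ $\tfrac14(\tbi^2(c)+\tbi_p^2(c))$ in the paper's notation). Where you genuinely diverge is in how these sums are evaluated. The paper never touches the closed binomial formula of Theorem~\ref{thm:number} at this stage; instead it derives the recursions $\tbi^2(c)=3\tbi^2(c-2)+2\tbi^2(c-3)+4\tbi(c-2)+4\tbi(c-3)+2e(c-2)+2e(c-3)$ and $\tbi_p^2(c)=\tbi_p^2(c-2)+2\tbi_p^2(c-4)+8\tbi_p(c-4)+8e_p(c-4)$ directly from the structural recursions for $e(c,b)$ and $e_p(c,b)$, and then verifies the closed forms by induction (Propositions~\ref{prop:tbi2} and~\ref{prop:tbip2}). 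You instead extract the closed forms by applying $x\partial_x$ twice to the bivariate generating functions $\frac{x^2y^2}{1-y-xy^2}$ and $\frac{x^3y^4}{1-y^2-x^2y^4}$ and reading off partial fractions at $x=2$ and $x=\sqrt2$; both identities check out, the pole structure you describe correctly accounts for the polynomial-in-$c$ coefficients and the period-$4$ dependence, and your leading-order bookkeeping ($\tfrac{8c}{9}-\tfrac{22c}{27}=\tfrac{2c}{27}$, $\tfrac{37}{27}-\tfrac{121}{81}=-\tfrac{10}{81}$) matches the stated main term. The trade-off: the recursive route needs no guess of the answer's shape beyond what induction verifies and stays entirely within the combinatorial setup of Section~\ref{sec:number}, while your generating-function route explains \emph{why} the answer has the form it does (degree of the polynomial coefficients from pole multiplicities, mod-$4$ periodicity from the roots $\pm 1/\sqrt2,\pm i$) at the cost of somewhat heavier coefficient extraction. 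Two small points to tidy up: the correction family actually needs two reindexings (the $c$ odd, $b$ even case gives prefactor $x^2y^3$ rather than $x^3y^4$), and, like the paper, you stop short of executing the final term-by-term verification of $\varepsilon(c)$ in each residue class, which is where all the real labor lives; neither omission is a gap in the method.
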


The proofs of Theorems \ref{thm:number} through \ref{thm:variance} rely on recursive formulas for quantities related to even continued fraction representations of 2-bridge knots (see Definition \ref{def:even}). The closed formulas in Theorems \ref{thm:number}, \ref{thm:average}, \ref{thm:variance} look somewhat unwieldy, but can be verified in a straightforward fashion using these recursive formulas derived in Sections \ref{sec:number}, \ref{sec:average}, and \ref{sec:variance}.

This article is organized as follows. In Section \ref{sec:background}, we recall the continued fraction representation of 2-bridge knots and how the crossing number and braid index of a knot can be recovered from certain continued fraction representations. In Section \ref{sec:number}, we find the number of 2-bridge knots with crossing number $c$ and braid index $b$, proving Theorem \ref{thm:number}. In Section \ref{sec:mode}, we find the mode of the braid indices of $2$-bridge knots, proving Theorem \ref{thm:mode}. In Section \ref{sec:average}, we compute the average braid index of $2$-bridge knots with crossing number $c$, and in Section \ref{sec:variance}, we compute the variance of the braid indices of $2$-bridge knots with crossing number $c$.\medskip

\textbf{Acknowledgments.} The authors thank J\'ozef Przytycki for suggesting this problem. This paper is the result of a summer research project in Vassar College's Undergraduate Research Science Institute.

\section{Background}
\label{sec:background}

In this section, we recall some facts about 2-bridge knots, their crossing numbers, and their braid indices. Our conventions largely follow Cromwell \cite{Cromwell}. Every 2-bridge knot corresponds to a rational number $\frac{p}{q}$ that can be expressed as a continued fraction
\[\frac{p}{q} = 
 \displaystyle a_1 + \frac{1\kern5em}{\displaystyle
    a_2 +\stackunder{}{\ddots\stackunder{}{\displaystyle
      {}+ \frac{1}{\displaystyle
        a_{n-1} + \frac{1}{a_n}}}
        }}\]
whose associated diagrams when $n=2m$ is even and when $n=2m+1$ is odd appear in Figure \ref{fig:2bdiagram}. If $i$ is odd and $a_i$ is positive or if $i$ is even and $a_i$ is negative, then the twist region associated to $a_i$ consists of crossings of the form $\tikz[baseline=.6ex, scale = .4]{
\draw (0,0) -- (1,1);
\draw (0,1) -- (.3,.7);
\draw (.7,.3) -- (1,0);
}$. If $i$ is odd and $a_i$ is negative or if $i$ is even and $a_i$ is positive, then the twist region associated to $a_i$ consists of crossings of the form $\tikz[baseline=.6ex, scale = .4]{
\draw (0,0) -- (.3,.3);
\draw (.7,.7) -- (1,1);
\draw (0,1) -- (1,0);
}.$ The diagram associated with the continued fraction $(a_1,\dots,a_n)$ is alternating if and only if the signs of all the parameters are the same. We denote the 2-bridge knot associated to the rational number $p/q$ with continued fraction representation $(a_1,\dots,a_n)$ by $K(p/q)$ or $K(a_1,\dots,a_n)$. The 2-bridge knot $K(-p/q)$ is the mirror of the 2-bridge knot $K(p/q)$. Figure \ref{fig:814} shows a specific example of the knot $8_{14}$ which has continued fraction $(2,-4,2,2)$.

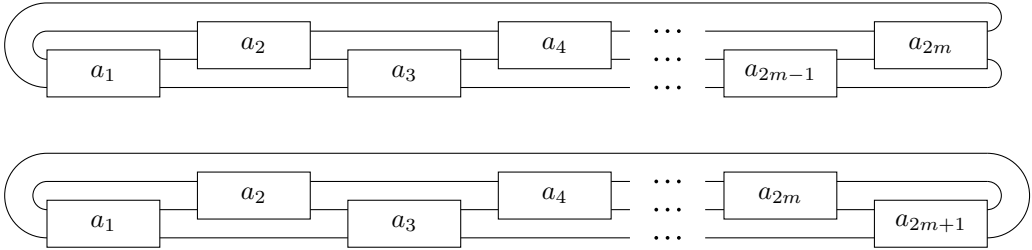
\begin{figure}[h]
\[\begin{tikzpicture}[scale=.5]
\draw (0,0) rectangle (3,1.25);
\draw (1.5, 5/8) node{$a_1$};
\draw (4,.75) rectangle (7,2);
\draw (5.5,11/8) node{$a_2$};

\begin{scope}[xshift=8cm]
\draw (0,0) rectangle (3,1.25);
\draw (1.5, 5/8) node{$a_3$};
\draw (4,.75) rectangle (7,2);
\draw (5.5,11/8) node{$a_4$};
\end{scope}

\begin{scope}[xshift=18cm]
\draw (0,0) rectangle (3,1.25);
\draw (1.5, 5/8) node{$a_{2m-1}$};
\draw (4,.75) rectangle (7,2);
\draw (5.5,11/8) node{$a_{2m}$};

\end{scope}

\fill[black] (16.5,.25) circle (.05);
\fill[black] (16.8,.25) circle (.05);
\fill[black] (16.2,.25) circle (.05);

\fill[black] (16.5,1) circle (.05);
\fill[black] (16.8,1) circle (.05);
\fill[black] (16.2,1) circle (.05);

\fill[black] (16.5,1.75) circle (.05);
\fill[black] (16.8,1.75) circle (.05);
\fill[black] (16.2,1.75) circle (.05);

\draw (3,1) -- (4,1);
\draw (3,.25) -- (8,.25);
\draw (7,1) -- (8,1);
\draw (7,1.75) -- (12,1.75);
\draw (11,1) -- (12,1);
\draw (15,1) -- (15.5,1);
\draw (15,1.75) -- (15.5,1.75);
\draw (11,.25) -- (15.5, .25);
\draw (18,.25) -- (17.5,.25);
\draw (18,1) -- (17.5,1);
\draw (17.5,1.75) -- (22,1.75);
\draw (21,1) -- (22,1);
\draw (21,.25) -- (25,.25);
\draw (25,.25) arc (-90:90:3/8);
\draw (0,1.75) -- (4,1.75);
\draw (0,1.75) arc (90:270:3/8);
\draw (0,2.5) -- (25,2.5);
\draw (0,2.5) arc(90:270:9/8);
\draw (25,2.5) arc(90:-90:3/8);

\begin{scope}[yshift=-4cm]
\draw (0,0) rectangle (3,1.25);
\draw (1.5, 5/8) node{$a_1$};
\draw (4,.75) rectangle (7,2);
\draw (5.5,11/8) node{$a_2$};

\begin{scope}[xshift=8cm]
\draw (0,0) rectangle (3,1.25);
\draw (1.5, 5/8) node{$a_3$};
\draw (4,.75) rectangle (7,2);
\draw (5.5,11/8) node{$a_4$};
\end{scope}

\begin{scope}[xshift=18cm]
\draw (0,.75) rectangle (3,2);
\draw (1.5, 11/8) node{$a_{2m}$};
\draw (4,0) rectangle (7,1.25);
\draw (5.5,5/8) node{$a_{2m+1}$};

\end{scope}

\fill[black] (16.5,.25) circle (.05);
\fill[black] (16.8,.25) circle (.05);
\fill[black] (16.2,.25) circle (.05);

\fill[black] (16.5,1) circle (.05);
\fill[black] (16.8,1) circle (.05);
\fill[black] (16.2,1) circle (.05);

\fill[black] (16.5,1.75) circle (.05);
\fill[black] (16.8,1.75) circle (.05);
\fill[black] (16.2,1.75) circle (.05);

\draw (3,1) -- (4,1);
\draw (3,.25) -- (8,.25);
\draw (7,1) -- (8,1);
\draw (7,1.75) -- (12,1.75);
\draw (11,1) -- (12,1);
\draw (15,1) -- (15.5,1);
\draw (15,1.75) -- (15.5,1.75);
\draw (11,.25) -- (15.5, .25);
\draw (22,.25) -- (17.5,.25);
\draw (18,1) -- (17.5,1);
\draw (17.5,1.75) -- (18,1.75);
\draw (21,1.75) -- (25,1.75);
\draw (21,1) -- (22,1);
\draw (25,1) arc (-90:90:3/8);
\draw (0,1.75) -- (4,1.75);
\draw (0,1.75) arc (90:270:3/8);
\draw (0,2.5) -- (25,2.5);
\draw (0,2.5) arc(90:270:9/8);
\draw (25,2.5) arc(90:-90:9/8);

\end{scope}

\end{tikzpicture}\]
\caption{\textbf{Top.} A 2-bridge knot represented by the tuple $(a_1,a_2,\dots,a_{2m})$. \textbf{Bottom.} The 2-bridge knot represented by the tuple $(a_1,a_2,\dots,a_{2m+1})$.}
\label{fig:2bdiagram}
\end{figure}

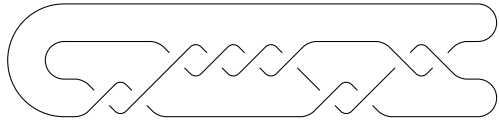
\begin{figure}[h]
\[\begin{tikzpicture}
\begin{scope}[rounded corners = 1mm, scale=.5]
 
\draw (-.5,0) -- (0,0) -- (1,1) -- (1.3,.7);
\draw (.7,.3) -- (1,0) -- (3,2) -- (3.3,1.7);
\draw (1.7,.3) -- (2,0) -- (6,0) -- (7,1) -- (7.3,.7);
\draw (2.7,1.3) -- (3,1) -- (4,2) -- (4.3,1.7);
\draw (3.7,1.3) -- (4,1) -- (5,2) -- (5.3,1.7);
\draw (4.7,1.3) -- (5,1) -- (6,2) -- (8,2) -- (9,1) -- (9.3,1.3);
\draw (5.7,1.3) -- (6.3,.7);
\draw (6.7,.3) -- (7,0) -- (8.3,1.3);
\draw (8.7,1.7) -- (9,2) -- (10,1) -- (10.5,1);
\draw (7.7,.3) -- (8,0) -- (10.5,0);
\draw (.3,.7) -- (0,1) -- (-.5,1);
\draw (2.3,1.7) -- (2,2) -- (-.5,2);
\draw (9.7,1.7) -- (10,2) -- (10.5,2);
\draw (-.5,1) arc (270:90:.5cm);
\draw (10.5,0) arc (-90:90:.5cm);
\draw (-.5,0) arc (270:90:1.5cm);
\draw (10.5,2) arc (-90:90:.5cm);
\draw (-.5,3) -- (10.5,3);

\end{scope}

\end{tikzpicture}\]
\caption{The knot $8_{14}$ is the two bridge knot $K(2,-4,2,2)$.}
\label{fig:814}
\end{figure}

Schubert \cite{Schubert} classified 2-bridge knots as follows.
\begin{theorem}
    The 2-bridge knot $K(p/q)$ and $K(p'/q')$ are equivalent if and only if $p=p'$ and either $q\equiv \pm q'$ mod $p$ or $qq'\equiv \pm 1$ mod $p$.
\end{theorem}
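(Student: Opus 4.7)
The plan is to compare the classification of 2-bridge knots with the classification of lens spaces via the 2-fold cyclic branched cover of $S^3$. First I would establish that the branched double cover of $K(p/q)$ is the lens space $L(p,q)$: the branched cover of a rational tangle of slope $p/q$ is a solid torus whose meridian disk is glued along the curve of slope $p/q$ on the boundary torus (with respect to the framing coming from the four tangle endpoints), and closing the tangle to form a 2-bridge knot glues two such solid tori along a common boundary, producing a genus-1 Heegaard splitting of $L(p,q)$.

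For the ``if'' direction, I would realize each allowed congruence by an explicit move on continued fraction representations. Changing $q$ by multiples of $p$ is absorbed into integer twists that can be slid around the closure of the rational tangle. The relation $qq' \equiv 1 \pmod{p}$ reflects the $90^{\circ}$ rotational symmetry of a rational tangle, which reverses the order of the continued fraction and sends slope $p/q$ to slope $p/q'$. The sign ambiguities $q \equiv -q' \pmod{p}$ and $qq' \equiv -1 \pmod{p}$ come from composing with the mirror symmetry, which negates every entry of the continued fraction. Each of these moves can be verified to preserve the equivalence class of the associated 2-bridge knot at the level of the diagrams in Figure~\ref{fig:2bdiagram}.

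For the ``only if'' direction, equivalent knots have homeomorphic branched double covers, so $L(p,q) \cong L(p',q')$. The classical Reidemeister classification of lens spaces then forces $p = p'$ and $q' \equiv \pm q^{\pm 1} \pmod{p}$, which unwinds to exactly the two congruence conditions in the statement. To close the argument, one must also verify that the branched double cover is a complete invariant on 2-bridge knots, i.e.~that the covering involution on $L(p,q)$ is unique up to conjugation so that its fixed set recovers the knot. This is the main obstacle: it requires a rigidity result on involutions of lens spaces (due to Hodgson and Rubinstein), and it is the only step that uses substantive 3-manifold topology beyond the construction of the branched cover itself.
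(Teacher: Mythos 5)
This statement is Schubert's classification theorem; the paper does not prove it but cites it to \cite{Schubert} (see also Cromwell \cite{Cromwell}), so there is no in-paper argument to compare against. Your outline is the standard modern proof via double branched covers, and it is sound in structure: the identification of the double branched cover of $K(p/q)$ with the lens space $L(p,q)$ through the genus-one Heegaard splitting is correct; the ``if'' direction via the three tangle symmetries (absorbing full twists for $q\mapsto q+p$, rotating the tangle for $q\mapsto q^{-1}$, and mirroring for the signs --- consistent with the fact that the $\pm$ in the statement reflects classification up to mirror image, which is how the paper uses the result) is the right elementary argument; and the ``only if'' direction correctly reduces to Reidemeister's classification of lens spaces. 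One correction to your own assessment, however: as you have structured the proof, the Hodgson--Rubinstein rigidity of involutions on lens spaces is not needed at all. That rigidity is what you would invoke if you ran the ``if'' direction through the branched cover (homeomorphic covers $\Rightarrow$ equivalent knots); since you instead realize the congruences by explicit isotopies, and the ``only if'' direction uses only that equivalent knots have homeomorphic branched covers (which is automatic), both implications close without any completeness statement for the branched-cover invariant. The genuinely deep external input in your argument is the lens space classification itself (that $L(p,q)\cong L(p,q')$ forces $q'\equiv\pm q^{\pm1}\bmod p$), which requires Reidemeister torsion or an equivalent tool, so it is not accurate that involution rigidity is ``the only step that uses substantive 3-manifold topology.'' For comparison, Schubert's original proof was a direct combinatorial argument on bridge presentations and did not pass through branched covers at all.
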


For the remainder of the paper, we will use continued fraction representations of even length, each of whose entries are nonzero even integers.
\begin{definition}
\label{def:even}
A $2m$-tuple $\mathbf{a}=(2a_1,2a_2,\dots,2a_{2m})$ where $a_i\in\mathbb{Z}$ and $a_i\neq 0$ for $1\leq i \leq 2m$ is called an \textit{even continued fraction representation} of the $2$-bridge knot $K(\mathbf{a})$. 
\end{definition} 
The diagram of $8_{14}$ in Figure \ref{fig:814} has even continued fraction representation $(2,-4,2,2)$.  Schubert's classification of $2$-bridge knots leads to the following result on the different possible even continued fraction representations of a 2-bridge knot $K$ or its mirror $\overline{K}$; see Corollary 8.7.3 and Theorem 8.8.1 in Cromwell \cite{Cromwell}. 

\begin{theorem}
\label{thm:even}
Every $2$-bridge knot $K$ has an even continued fraction representation $(2a_1,2a_2,\dots,2a_{2m})$. If a 2-bridge knot $K$ is represented by $(2a_1,2a_2,\dots,2a_{2m})$, then the set of even continued fraction representations of $K$ and its mirror $\overline{K}$ is
\[\{ (2a_1,2a_2,\dots,2a_{2m}),(-2a_{2m},-2a_{2m-1},\dots,-2a_1),(-2a_1,-2a_2,\dots,-2a_{2m}), (2a_{2m},2a_{2m-1},\dots,2a_1)\}.\]
\end{theorem}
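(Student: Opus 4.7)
The plan is to prove existence of an even continued fraction representation and to classify all such representations, using Schubert's theorem (stated just above) together with the standard matrix interpretation of continued fractions.

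For existence, given $K = K(p/q)$ with $p$ odd (as $K$ is a knot rather than a two-component link), I would use a ``nearest-even'' variant of the Euclidean algorithm: at each stage, choose the even integer $2a_i$ with $|p_i/q_i - 2a_i| \leq 1$ and set $r_i = p_i - 2a_i q_i$, recursing on $q_i / r_i$. The denominators strictly decrease, forcing termination. Combining this with standard continued fraction identities such as $[\ldots,a_{n-1},a_n] = [\ldots,a_{n-1},a_n \mp 1, \pm 1]$ allows one to adjust the parity of the length to be even and to eliminate any $\pm 1$ or $0$ entries produced along the way; the oddness of $p$ is essential for this to terminate with all nonzero even entries.

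To verify that each of the four listed tuples represents $K$ or $\overline{K}$, I would form the matrix product $M = \prod_{i=1}^{2m}\begin{pmatrix} 2a_i & 1 \\ 1 & 0\end{pmatrix} = \begin{pmatrix} p & p' \\ q & q' \end{pmatrix}$, where $p/q = [2a_1; 2a_2, \ldots, 2a_{2m}]$. Since $2m$ is even, $\det M = 1$, whence $p'q \equiv -1 \pmod{p}$. The reversed tuple $(2a_{2m},\ldots,2a_1)$ corresponds to the transposed product $M^T$ and evaluates to $p/p'$, so Schubert's theorem identifies $K(p/p')$ with $K(p/q)$. Negating every entry replaces $[2a_1;2a_2,\ldots,2a_{2m}]$ with $-p/q$, representing the mirror $\overline{K}$. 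Composing these two involutions produces exactly the four tuples listed, each representing $K$ or $\overline{K}$.

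For the converse, any even continued fraction representation $(2b_1,\ldots,2b_{2n})$ of $K$ or $\overline{K}$ has value $p''/q''$ with $|p''| = p$ and $q''$ one of at most four residues modulo $p$ (by Schubert). It remains to show that each rational admits a \emph{unique} even continued fraction expansion with nonzero even entries, which follows by induction on length: the leading entry $2a_1$ is forced to be the unique even integer with $|p/q - 2a_1| < 1$, since any valid tail $1/[2a_2;2a_3,\ldots,2a_{2m}]$ with every $|2a_i|\geq 2$ has absolute value strictly less than $1$. The main obstacle is establishing this strict inequality, which requires a careful auxiliary induction on length to rule out edge cases where a tail could equal $\pm 1$ exactly; once this is settled, the theorem follows by combining existence, Schubert's theorem, and uniqueness.
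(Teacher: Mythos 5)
The paper itself gives no proof of this statement: it is quoted as background, attributed to Schubert's classification via Corollary 8.7.3 and Theorem 8.8.1 of Cromwell, so there is no internal argument to compare yours against. Your outline is essentially the standard proof of that cited result, and its core steps are sound: the symmetric-matrix identity correctly shows the reversed tuple evaluates to $p/p'$ with $p'q\equiv -1\pmod{p}$ (so Schubert identifies $K(p/p')$ with $K(p/q)$), entrywise negation negates the value and hence mirrors the knot, and uniqueness of the all-even expansion reduces the converse to counting admissible fractions.

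Two issues are worth flagging. First, the existence step has a genuine soft spot: the nearest-even algorithm terminates because the remainder $r_i=p_i-2a_iq_i$ has parity opposite to $q_i$, forcing $|r_i|<|q_i|$ strictly — but this requires the initial denominator $q$ to be even. If $p$ and $q$ are both odd (e.g.\ $3/1$), the remainder has the same parity as the denominator, $|r|=|q|$ can occur, and the recursion does not terminate; you must first use Schubert's equivalence to replace $q$ by $q\pm p$. Your proposed fallback of adjusting the length parity with $[\dots,a_n]=[\dots,a_n\mp1,\pm1]$ does not repair this, since it introduces odd entries; in fact once $q$ is even no adjustment is needed, because the alternating parities force the algorithm to stop at an even step and $|p/q|>1$ rules out zero entries. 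Second, the uniqueness lemma you single out as the main obstacle is actually immediate: inducting on length, $|2a+1/x|\geq 2-|1/x|>1$ whenever $|x|>1$, so no tail can equal $\pm1$ and no edge cases arise. The point you pass over too quickly is the converse's counting step: from ``the denominator lies in one of four residue classes mod $p$'' you still need that any even-expansion value $P/Q$ has $|Q|<|P|=p$ with $Q$ even, and that each residue class modulo the odd integer $p$ contains exactly one even representative in $(-p,p)$, so the four classes produce exactly the four listed tuples. With those repairs the argument is complete.
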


It is possible that two of the even continued fractions listed in Theorem \ref{thm:even} are the same $2m$-tuples.
\begin{definition}
    An even continued fraction representation $(2a_1,\dots,2a_{2m})$ is a \textit{palindrome} if $(2a_1,\dots,2a_{2m})=(2a_{2m},\dots,2a_1)$ and an \textit{anti-palindrome} if $(2a_1,\dots,2a_{2m}) = (-2a_{2m},\dots,-2a_1)$.
\end{definition}
 Theorem \ref{thm:even} implies that if a 2-bridge knot is represented by a palindromic or anti-palindromic even continued fraction, then the knot and its mirror have precisely two even continued fraction representations, and if a 2-bridge knot is not represented by a palindromic or anti-palindromic even continued fraction, then the knot and its mirror have precisely four even continued fraction representations.

Our goal is study the braid indices of 2-bridge knots with a fixed crossing number $c$, but the crossing number of the 2-bridge knot $K(2a_1,\dots,2a_{2m})$ is often less than the number of crossings in the diagram in Figure \ref{fig:2bdiagram}. Suzuki \cite{Suzuki} proved the following result on the crossing number of $K(2a_1,\dots,2a_{2m})$ (see also \cite{DEH}).

\begin{theorem}
\label{thm:crossing}
Let $\mathbf{a}=(2a_1,\dots,2a_{2m})$ be an even continued fraction representation of the 2-bridge knot $K(\mathbf{a})$. The crossing number $c(K(\mathbf{a}))$ of $K(\mathbf{a})$ is
\[c(K(\mathbf{a})) = \left(\sum_{i=1}^{2m} 2|a_i|\right) - \ell,\]
where $\ell$ is the number of sign changes in the $2m$-tuple $\mathbf{a}$.
\end{theorem}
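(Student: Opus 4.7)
The proof strategy is to verify the formula by two inequalities. The standard diagram $D$ shown in Figure \ref{fig:2bdiagram} has $N := \sum_{i=1}^{2m} 2|a_i|$ crossings, giving the trivial upper bound $c(K(\mathbf{a})) \leq N$. The plan is to sharpen this to $c(K(\mathbf{a})) = N - \ell$.

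For the upper bound $c(K(\mathbf{a})) \leq N - \ell$, I would induct on the number of sign changes $\ell$. Whenever two consecutive entries $2a_i$ and $2a_{i+1}$ have opposite signs, their twist regions consist of crossings of opposite handedness, and I would exhibit a short sequence of Reidemeister moves (equivalently, a continued fraction identity at the level of rational tangles) that rewrites the local configuration as an equivalent tangle with one fewer crossing. Packaging this move so that it stays inside the class of even continued fractions then produces a new representative $\mathbf{a}'$ of $K(\mathbf{a})$ with $\ell - 1$ sign changes and $\sum 2|a_i'| = N - 1$. Iterating $\ell$ times yields a diagram of $K(\mathbf{a})$ with $N - \ell$ crossings.

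For the lower bound $c(K(\mathbf{a})) \geq N - \ell$, observe that the diagram obtained at the end of the induction has no sign changes, hence is alternating. Since the starting diagram has each twist region containing at least two crossings and the inductive move can be arranged so that no twist region loses both of its crossings, the final diagram is a reduced alternating diagram. The Kauffman--Murasugi--Thistlethwaite theorem then implies that it realizes the crossing number, so $c(K(\mathbf{a})) = N - \ell$.

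The main obstacle is pinning down the precise local move and confirming it keeps the representation an even continued fraction (even entries, even length, all nonzero) throughout the induction. The underlying arithmetic identity
\[[\ldots, 2a, -2b, \ldots] \;=\; [\ldots, 2a-1,\, 1,\, 2b-1, \ldots]\]
shows that the raw reduction of one crossing is available in unrestricted continued fractions; the real work lies in bundling a pair of such rewrites to land back in the even class, and in handling boundary cases where the sign change occurs at position $1$ or $2m-1$ or where $|a_i| = 1$. A careful case analysis, together with the fact that $K(p/q)$ and $K(p'/q')$ are the same knot whenever the corresponding rational numbers agree, should make the bookkeeping uniform and complete the argument.
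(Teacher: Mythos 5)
First, a point of comparison: the paper does not prove Theorem \ref{thm:crossing} at all---it is quoted from Suzuki \cite{Suzuki} (see also \cite{DEH})---so there is no in-paper argument to measure yours against. Your outline follows the standard route from the literature (reduce the continued fraction to an alternating form, count crossings, and invoke Kauffman--Murasugi--Thistlethwaite for the lower bound), but as written it has two genuine problems beyond the deferred ``careful case analysis.''

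The first is a parity obstruction that makes your inductive step impossible as stated. You ask for a new representative $\mathbf{a}'$ that is still an even continued fraction, has $\ell-1$ sign changes, and satisfies $\sum 2|a_i'| = N-1$. The entry sum of any even continued fraction is even, and $N$ is even, so a total of $N-1$ cannot be realized in the even class. More globally, since $c = N-\ell$ with $N$ even forces $\ell \equiv c \pmod 2$, when $c$ is odd the alternating endpoint of your reduction cannot be an even continued fraction at all: an even continued fraction with no sign changes represents a knot of even crossing number. So ``staying inside the class of even continued fractions'' is not a packaging issue to be repaired by bundling rewrites in pairs (which also fails when $\ell$ is odd); the induction must leave the even class and work with general positive continued fractions, and that is precisely where the content of the proof lives. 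The second problem is the lower bound. ``No twist region loses both of its crossings'' is neither the definition of nor a sufficient criterion for a reduced diagram; what you need is that the terminal all-positive continued fraction has no zero entries (so no amalgamation is pending) and that the associated $4$-plat diagram has no nugatory crossings, which is a statement about the positive expansion you end up with, not about which twist regions of the diagram in Figure \ref{fig:2bdiagram} survive. Until the local move is written down explicitly, shown to terminate in a positive continued fraction with digit sum exactly $N-\ell$, and that diagram is verified to be a connected reduced alternating diagram, the proposal is a plausible strategy rather than a proof. Since the result is already established in the literature, the efficient course is the one the paper takes: cite it.
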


Murasugi \cite{Murasugi} showed how to compute the braid index of a 2-bridge knot via its even continued fraction representation.
\begin{theorem}
\label{thm:braid}
    Let $K(\mathbf{a})$ be a 2-bridge knot with even continued fraction representation $\mathbf{a}=(2a_1,\dots,2a_{2m})$. The braid index $b(K(\mathbf{a)})$ of $K(\mathbf{a})$ is
    \[b(K(\mathbf{a})) = \left(\sum_{i=1}^{2m} |a_i|\right) -\ell +1,\]
    where $\ell$ is the number of sign changes in the $2m$-tuple $\mathbf{a}$.
\end{theorem}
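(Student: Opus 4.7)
The plan is to establish the formula as matching upper and lower bounds on $b(K(\mathbf{a}))$, following Murasugi's strategy.

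For the upper bound, I would exhibit an explicit braid whose closure is $K(\mathbf{a})$ having $\sum_{i=1}^{2m}|a_i| - \ell + 1$ strands. Starting from the standard 2-bridge diagram of Figure \ref{fig:2bdiagram}, I would first handle the case $\ell = 0$ (all entries $a_i$ of the same sign), where the diagram is alternating. Here the Seifert circles can be enumerated directly, and Vogel/Yamada's algorithm produces a braid closure on $\sum|a_i|+1$ strands, matching the formula. For the general case, I would argue that at each index $i$ where $a_i$ and $a_{i+1}$ have opposite signs, the local piece of the 2-bridge diagram at that junction admits an isotopy merging two otherwise distinct Seifert circles, lowering the strand count by one. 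A careful induction on the number $\ell$ of sign changes then yields the claimed braid.

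For the lower bound, I would invoke the Morton--Franks--Williams inequality
\[b(K) \;\geq\; \tfrac{1}{2}\operatorname{breadth}_v P_K(v,z) + 1,\]
where $P_K(v,z)$ is the HOMFLY polynomial. The HOMFLY polynomial of a rational knot admits a transfer-matrix style recursion in terms of the continued fraction entries, which one sets up by applying the skein relation at a crossing in the last twist region. The key inductive claim, proved by induction on $m$ and on $\sum|a_i|$, is that
\[\operatorname{breadth}_v P_{K(\mathbf{a})}(v,z) \;=\; 2\Bigl(\sum_{i=1}^{2m}|a_i| - \ell\Bigr),\]
with extremal coefficients that remain nonzero as new twist regions are concatenated. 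The MFW inequality then yields the matching lower bound.

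The main obstacle is the bookkeeping in both directions. On the upper-bound side, one must verify that the strand-reductions at distinct sign changes are genuinely independent and together account for exactly $\ell$ reductions. On the lower-bound side, the inductive HOMFLY calculation requires a stronger inductive statement that records the extremal coefficients explicitly as polynomials in $z$, so one can check that the leading and trailing terms never cancel when passing from $(2a_1,\dots,2a_{2m})$ to $(2a_1,\dots,2a_{2m}, 2a_{2m+1}, 2a_{2m+2})$. Because this result is due to Murasugi, the most efficient write-up states the two inductive claims and refers the reader to \cite{Murasugi} for the full HOMFLY computation.
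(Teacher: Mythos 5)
First, note that the paper does not prove this statement at all: Theorem \ref{thm:braid} is quoted as background and attributed to Murasugi \cite{Murasugi}, so there is no in-paper argument to compare yours against. Your outline does track the strategy Murasugi actually used --- an explicit braid representative for the upper bound and the Morton--Franks--Williams inequality applied to the HOMFLY polynomial for the lower bound --- so the overall approach is the standard and correct one.

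As a proof, however, the proposal has genuine gaps, because the two claims that carry all of the content are asserted rather than established. On the upper-bound side, the assertion that each sign change in $\mathbf{a}$ permits an isotopy merging two Seifert circles, and that these $\ell$ reductions are independent, is exactly the hard combinatorial point; without an explicit description of the resulting diagram or braid word one cannot conclude that the strand count is $\sum_{i=1}^{2m}|a_i|-\ell+1$ rather than something larger. On the lower-bound side, the formula $\operatorname{breadth}_v P_{K(\mathbf{a})}(v,z)=2\left(\sum_{i=1}^{2m}|a_i|-\ell\right)$ is, once combined with MFW and the upper bound, equivalent to the theorem itself; proving it requires the strengthened induction you mention (tracking the extremal $v$-coefficients as polynomials in $z$ and verifying they never cancel when twist regions are concatenated), and none of that computation is supplied. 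Deferring both claims to \cite{Murasugi} is legitimate, but then the proposal collapses to the same bare citation the paper already gives; if the goal is a self-contained proof, the transfer-matrix computation and the Seifert-circle bookkeeping must actually be carried out.
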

Although we do not directly use the result in our proofs, it is interesting to note that Diao, Ernst, and Hetyei \cite{DEH} showed how to compute the braid index of a 2-bridge knot from its alternating diagram. The 2-bridge knot $8_{14}$ knot $K(2,-4,2,2)$ in Figure \ref{fig:2bdiagram} has the even continued fraction representation $(2,-4,2,2)$, which has two sign changes. Therefore the crossing number of $K(2,-4,2,2)$ is eight, and its braid index is four.

\section{The number of 2-bridge knots with crossing number $c$ and braid index $b$}
\label{sec:number}

In this section, we find $k_{c,b}$, the number of $2$-bridge knots with crossing number $c$ and braid index $b$, by studying the set of even continued fraction representations of $2$-bridge knots with crossing number $c$ and the subset of palindromic or anti-palindromic even continued fraction representations. Let $E(c)$ be the set of even continued fraction representations corresponding to $2$-bridge knots of crossing number $c$, that is, let 
\[E(c) =\left\{(2a_1,2a_2,\dots,2a_{2m})~:~ a_i\in\Z, a_i\neq 0, \left(\sum_{i=1}^{2m} 2|a_i|\right) - \ell = c\right\},\]
where $\ell$ is the number of sign changes in the sequence $(2a_1,2a_2,\dots,2a_{2m})$.
Furthermore, let $E(c,b)$ be the subset of $E(c)$ consisting of even continued fraction representations in $E(c)$ corresponding to $2$-bridge knots of braid index $b$, that is, let
\[E(c,b) = \left\{(2a_1,2a_2,\dots,2a_{2m})\in E(c)~:~ \left(\sum_{i=1}^{2m} |a_i|\right) -\ell +1=b\right\},\]
where again $\ell$ is the number of sign changes in the sequence $(2a_1,2a_2,\dots,2a_{2m})$.
Define $e(c)=|E(c)|$ and $e(c,b)=|E(c,b)|$. 

\begin{example} 
\label{ex:e}
All nonempty sets $E(c,b)$ where $c\leq 6$ are as follows:  
    \begin{align*}
        E(3,2) = & \; \{(2,-2), (-2,2)\},\\
        E(4,3) = & \; \{(2,2), (-2,-2)\},\\
        E(5,2) = & \; \{(2,-2,2,-2), (-2,2,-2,2)\},\\
        E(5,3) = & \; \{(2,-4), (-2,4), (4,-2), (-4,2)\},\\
        E(6,3) = & \; \{(2,2,-2,2),(-2,-2,2,-2), (2,-2,2,2), (-2,2,-2,-2), (2,-2,-2,2), (-2,2,2,-2)\},\\
        E(6,4) = & \; \{(2,4), (4,2), (-2,-4), (-4,-2)\}.
    \end{align*}
\end{example}

Let $E_p(c)$ be the subset of $E(c)$ consisting of palindromic or anti-palindromic tuples, and similarly let $E_p(c,b)$ be the subset of $E(c,b)$ consisting of palindromic or anti-palindromic tuples. Define $e_p(c)=|E_p(c)|$ and $e_p(c,b)=|E_p(c,b)|$. 

\begin{example} 
\label{ex:ep}
All nonempty sets $E_p(c,b)$ where $c\leq 6$ are as follows:
\begin{align*}
        E_p(3,2) = & \; \{(2,-2), (-2,2)\},\\
        E_p(4,3) = & \; \{(2,2), (-2,-2)\},\\
        E_p(5,2) = & \; \{(2,-2,2,-2), (-2,2,-2,2)\},\\
        E_p(6,3) = & \; \{(2,-2,-2,2), (-2,2,2,-2)\}.\\
\end{align*}
The sets $E_p(3,2)$ and $E_p(5,2)$ consist of anti-palindromes, while the sets $E_p(4,3)$ and $E_p(6,3)$ consist of palindromes.
\end{example}
Theorem \ref{thm:even} implies that the number $k_{c,b}$ of 2-bridge knots with crossing number $c$ and braid index $b$ is $k_{c,b}=\frac{1}{4}(e(c,b)+e_p(c,b))$ and that the number $|\mathcal{K}_c|$ of $2$-bridge knots with crossing number $c$ is $|\mathcal{K}_c|=\frac{1}{4}(e(c)+e_p(c))$. Ernst and Sumners \cite{ErnSum} first computed $|\mathcal{K}_c|$. In the following proposition, we find recursive and closed formulas for $e(c,b)$.
\begin{proposition}
\label{prop:ecb}
    If $c\geq 6$, then 
    \begin{equation}
    \label{eq:ecbrecur}
    e(c,b) = e(c-2,b) + 2e(c-2,b-1) + 2e(c-3,b-1).
    \end{equation}
    For all $c$ and $b$,
    \[e(c,b) = \begin{cases}
    2 & \text{if $3\leq c$, $c$ is odd, and $b=2$,}\\
    2^{b-2}\binom{c-b}{b-2} &\text{if $3\leq c$ and $3\leq b \leq \left\lceil \frac{c+1}{2}\right\rceil$,}\\
    0 & \text{otherwise.}
\end{cases}\]
\end{proposition}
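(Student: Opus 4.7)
The plan is to establish the closed formula first by a direct counting argument, and then deduce the recursion as an algebraic consequence. The starting observation is that the two defining constraints $\sum_{i=1}^{2m} 2|a_i| - \ell = c$ and $\sum_{i=1}^{2m}|a_i| - \ell + 1 = b$ jointly pin down the two statistics $\sum_i |a_i| = c-b+1$ and $\ell = c-2b+2$. Hence counting $E(c,b)$ reduces to counting $2m$-tuples of nonzero integers (for any admissible $m$) with these two statistics prescribed.

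Each such tuple factors uniquely into a composition $(|a_1|,\dots,|a_{2m}|)$ of $c-b+1$ into $2m$ positive parts, and a sign pattern in $\{\pm 1\}^{2m}$ with exactly $\ell$ sign changes. The number of compositions is $\binom{c-b}{2m-1}$, and the number of sign patterns is $2\binom{2m-1}{\ell}$ (pick the starting sign, then pick which $\ell$ of the $2m-1$ gaps carry a sign change), so
\[e(c,b) = 2\sum_{m \geq 1}\binom{c-b}{2m-1}\binom{2m-1}{\ell}.\]
Applying the subset-of-a-subset identity $\binom{c-b}{2m-1}\binom{2m-1}{\ell} = \binom{c-b}{\ell}\binom{c-b-\ell}{2m-1-\ell}$ together with the arithmetic identity $c-b-\ell = b-2$, this rewrites as
\[e(c,b) = 2\binom{c-b}{b-2}\sum_{j} \binom{b-2}{j},\]
where $j = 2m-1-\ell$ runs over nonnegative integers of one fixed parity (determined by the parity of $\ell$). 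For $b\geq 3$ the parity-restricted binomial sum equals $2^{b-3}$, yielding the advertised formula $e(c,b) = 2^{b-2}\binom{c-b}{b-2}$. The exceptional case $b=2$ forces $\ell = c-2$ and $\sum|a_i| = c-1 = 2m$, so each $|a_i|=1$ with strictly alternating signs, giving $e(c,2)=2$ for $c$ odd and $e(c,2)=0$ for $c$ even. The vanishing when $b > \lceil (c+1)/2\rceil$ is automatic because $\ell$ would be negative.

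With the closed formula in hand, the recursion becomes a short exercise in binomial identities. In the range $b\geq 4$, the three terms on the right side of \eqref{eq:ecbrecur} evaluate via the closed formula to
\[2^{b-2}\binom{c-b-2}{b-2} + 2^{b-2}\binom{c-b-1}{b-3} + 2^{b-2}\binom{c-b-2}{b-3},\]
and two successive applications of Pascal's rule collapse the bracketed combination to $\binom{c-b}{b-2}$. The only subtlety I foresee is that for small $b$ some of the terms $e(c-2,b)$, $e(c-2,b-1)$, $e(c-3,b-1)$ may land in the anomalous $b=2$ slot rather than on the generic formula, and the recursion must be verified separately there; I expect this boundary bookkeeping to be the main obstacle, and it should be resolved by splitting on the parity of $c$ for $b=3$ and checking the two resulting identities by hand. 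Once the recursion is matched on this boundary, it holds uniformly for all $c\geq 6$.
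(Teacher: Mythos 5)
Your proposal is correct, and it takes a genuinely different route from the paper. The paper works in the opposite order: it proves the recursion \eqref{eq:ecbrecur} first, by partitioning $E(c,b)$ into six subsets according to the last one or two entries of $\mathbf{a}$ (whether $|a_{2m}|>1$, whether $|a_{2m-1}|>1$, and the sign pattern of $a_{2m-2},a_{2m-1},a_{2m}$) and exhibiting bijections onto sets of the form $E(c',b')$; the closed formula is then extracted by induction from the recursion and the base cases $c\leq 6$. Your argument instead derives the closed formula directly: the two constraints pin down $\sum|a_i|=c-b+1$ and $\ell=c-2b+2$, the tuple factors into a composition and a sign pattern, and the identity $\binom{c-b}{2m-1}\binom{2m-1}{\ell}=\binom{c-b}{\ell}\binom{b-2}{2m-1-\ell}$ together with the parity-restricted binomial sum $\sum_{j\equiv \ell+1\,(2)}\binom{b-2}{j}=2^{b-3}$ collapses the sum over $m$. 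I checked this computation and it is sound, including the degenerate case $b=2$ (where $2m$ is forced to equal $c-1$) and the vanishing for $\ell<0$; one small point worth making explicit is that every residue $j$ of the required parity in $[0,b-2]$ is realized by an admissible $m\geq 1$ (when $\ell=0$ the parity forces $j\geq 1$), so the parity sum is not truncated. Your approach buys a non-inductive, structurally transparent explanation of the factor $2^{b-2}\binom{c-b}{b-2}$, whereas the paper's bijective decomposition makes the recursion the primary object, which it then reuses to drive the recursions for $e(c)$, $\tbi(c)$, and $\tbi^2(c)$ in later sections; since you still obtain \eqref{eq:ecbrecur} (now as a Pascal-rule identity plus the boundary checks at $b\in\{2,3\}$, which you correctly flag and which do work out --- e.g.\ for $b=3$ exactly one of $c-2,c-3$ is odd, contributing $4=2c-6-(2c-10)$), nothing downstream of the proposition is affected.
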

\begin{proof}
    We first prove the recursive formula and then use the recursive formula to verify the closed formula. In order to prove the recursive formula, we partition $E(c,b)$ into subsets and then show that the sizes of those subsets correspond to the terms in the recursive formula. For the remainder of the proof, let $\mathbf{a}=(2a_1,2a_2,\dots,2a_{2m})$. Define
    \begin{align*}
        E_1(c,b) = & \; \{\mathbf{a}\in E(c,b)~:~|a_{2m}|>1\},\\
        E_2(c,b) = & \; \{\mathbf{a}\in E(c,b)~:~ |a_{2m}|=1,|a_{2m-1}|>1 \},\\   
        E_3(c,b) = & \; \{\mathbf{a}\in E(c,b)~:~ |a_{2m}|=|a_{2m-1}|=1, \sign(a_{2m-2})=\sign(a_{2m-1})=\sign(a_{2m})\},\\
        E_4(c,b) = & \; \{\mathbf{a}\in E(c,b)~:~ |a_{2m}|=|a_{2m-1}|=1, -\sign(a_{2m-2})=\sign(a_{2m-1})=\sign(a_{2m})\},\\
        E_5(c,b) = & \; \{\mathbf{a}\in E(c,b)~:~ |a_{2m}|=|a_{2m-1}|=1, \sign(a_{2m-2})=-\sign(a_{2m-1})=\sign(a_{2m})\},\\
        E_6(c,b) = & \; \{\mathbf{a}\in E(c,b)~:~ |a_{2m}|=|a_{2m-1}|=1, \sign(a_{2m-2})=\sign(a_{2m-1})=-\sign(a_{2m})\}.\\
    \end{align*}
    Also, define $e_i(c,b)=|E_i(c,b)|$ for $1\leq i \leq 6$. Because the subsets $E_i(c,b)$ for $1\leq i \leq 6$ partition $E(c,b)$, it follows that $e(c,b)=\sum_{i=1}^6 e_i(c,b)$. 

    Define $f_1:E_1(c,b)\to E(c-2,b-1)$ by $f_1(\mathbf{a})=(2a_1,\dots, 2a_{2m-1},2a_{2m} -2\sign(a_{2m})).$ Since $\mathbf{a}$ and $f_1(\mathbf{a})$ have the same number of sign changes, Theorems \ref{thm:crossing} and \ref{thm:braid} imply that $f_1(\mathbf{a})\in E(c-2,b-1)$. Because $f_1$ is a bijection, it follows that $e_1(c,b)=e(c-2,b-1)$.

    Define $f_2:E_2(c,b) \to E(c-2,b-1)-E_1(c-2,b-1)$ by \[f_2(\mathbf{a})=(2a_1,\dots,2a_{2m-2},2a_{2m-1}-2\sign(a_{2m-1}),2a_{2m}).\] Since $\mathbf{a}$ and $f_2(\mathbf{a})$ have the same number of sign changes, Theorems \ref{thm:crossing} and \ref{thm:braid} imply that $f_2(\mathbf{a})\in E(c-2,b-1)$. Since the last entry of $f_2(\mathbf{a})$ has absolute value $2$, it follows that $f_2(\mathbf{a})\notin E_1(c-2,b-1)$. Thus $f_2(\mathbf{a})\in E(c-2,b-1)-E_1(c-2,b-1)$. Because $f_2$ is a bijection, it follows that $e_2(c,b)=e(c-2,b-1) - e_1(c-2,b-1)$.

    Define $f_3:E_3(c,b)\to E(c-4,b-2)$ by $f_3(\mathbf{a})=(2a_1,\dots, 2a_{2m-2})$. Since $\mathbf{a}$ and $f_3(\mathbf{a})$ have the same number of sign changes, Theorems \ref{thm:crossing} and \ref{thm:braid} imply that $f_3(\mathbf{a})\in E(c-4,b-2)$. Because $f_3$ is a bijection, it follows that $e_3(c,b)=e(c-4,b-2)$.

    Define $f_4:E_4(c,b)\to E(c-3,b-1)$ by $f_4(\mathbf{a})=(2a_1,\dots, 2a_{2m-2})$. Since $\mathbf{a}$ has one more sign change than $f_4(\mathbf{a})$, Theorems \ref{thm:crossing} and \ref{thm:braid} imply that $f_4(\mathbf{a})\in E(c-3,b-1)$. Because $f_4$ is a bijection, it follows that $e_4(c,b)=e(c-3,b-1)$.

    Define $f_5:E_5(c,b)\to E(c-2,b)$ by $f_5(\mathbf{a})=(2a_1,\dots, 2a_{2m-2})$. Since $\mathbf{a}$ has two more sign changes than $f_5(\mathbf{a})$, Theorems \ref{thm:crossing} and \ref{thm:braid} imply that $f_5(\mathbf{a})\in E(c-2,b)$. Because $f_5$ is a bijection, it follows that $e_5(c,b)=e(c-2,b)$.

    Define $f_6:E_6(c,b)\to E(c-3,b-1)$ by $f_6(\mathbf{a})=(2a_1,\dots, 2a_{2m-2})$. Since $\mathbf{a}$ has one more sign change than $f_6(\mathbf{a})$, Theorems \ref{thm:crossing} and \ref{thm:braid} imply that $f_6(\mathbf{a})\in E(c-3,b-1)$. Because $f_6$ is a bijection, it follows that $e_6(c,b)=e(c-3,b-1)$.

    Therefore
    \begin{align*}
        e(c,b) = & \; \sum_{i=1}^6 e_i(c,b)\\
        = & \; 2e(c-2,b-1) -e_1(c-2,b-1) + e(c-4,b-2)+ 2e(c-3,b-1)+e(c-2,b)\\
        = & \; 2e(c-2,b-1) - e(c-4,b-2) + e(c-4,b-2) + 2e(c-3,b-1)+e(c-2,b)\\
        = & \; e(c-2,b) + 2e(c-2,b-1) + 2e(c-3,b-1),
    \end{align*}
    proving the recursive formula.

    In order to prove the closed formula, we first observe that Example \ref{ex:e} confirms that closed formula when $c\le 6$. Also, the unknot is the only knot whose braid index is one, and thus $e(c,1)=0$ for all $c$.

If $c\geq 6$ proceed via induction on $c$. Suppose that $b=2$. Since $e(c,1)=0$ for all $c$, Equation \eqref{eq:ecbrecur} implies that $e(c,2)= e(c-2,2)$ for all $c\geq 6$. Since $e(3,2) = 2$ and $e(4,2)=0$, it follows that $e(c,2) = 2$ when $3\leq c$ and $c$ is odd, and $e(c,2)=0$ when $3\leq c$ and $c$ is even. Suppose that $6\leq c$ and $\left\lceil\frac{c+1}{2}\right\rceil<b$. By the inductive hypothesis, $e(c-2,b)$, $e(c-2,b-1)$ and $e(c-3,b-1)$ are all zero, and hence Equation \eqref{eq:ecbrecur} implies that $e(c,b)$ is also zero.

Suppose that $6\leq c$ and $b=3$. Equation \eqref{eq:ecbrecur} implies that
\[e(c,3) = e(c-2,3) +2e(c-2,2) +2e(c-3,2) = 2\binom{c-5}{1}+4 = 2c-6 = 2\binom{c-3}{1},\]
as desired.

Finally, suppose that $6\leq c$ and $3< b \leq \left\lceil\frac{c+1}{2}\right\rceil$. The inductive hypothesis implies that
\begin{align*}
    e(c,b) = & \; e(c-2,b) + 2e(c-2,b-1) + 2e(c-3,b-1)\\
     = & \; 2^{b-2}\binom{c-b-2}{b-2} + 2\left(2^{b-3} \binom{c-b-1}{b-3}\right) + 2\left(2^{b-3}\binom{c-b-2}{b-3}\right)\\
     = & \; 2^{b-2}\left( \binom{c-b-2}{b-2} + \binom{c-b-1}{b-3} + \binom{c-b-2}{b-3}\right)\\
    = &\; 2^{b-2}\left(\binom{c-b-1}{b-3} + \binom{c-b-1}{b-2}\right)\\
    =&\; 2^{b-2}\binom{c-b}{b-2}.
\end{align*}
\end{proof}
As a corollary, we compute the number $e(c)$ of elements of the set $E(c)$.
\begin{corollary}
\label{cor:ec}
    For each $c\geq 3$, the number of elements of $E(c)$ is $e(c)=\frac{2\left(2^{c-2}-(-1)^{c-2}\right)}{3}.$
\end{corollary}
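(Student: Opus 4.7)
The plan is to sum the recursion for $e(c,b)$ over all braid indices $b$ to obtain a linear recursion for $e(c)$, and then verify that the proposed closed formula satisfies it.

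First, I would sum Equation \eqref{eq:ecbrecur} over all $b\in\Z$. Since $e(c,b)=0$ for all but finitely many $b$, the sums converge and reindexing gives
\[\sum_{b}e(c-2,b-1) = \sum_{b} e(c-2,b) = e(c-2), \qquad \sum_{b} e(c-3,b-1)= e(c-3).\]
Thus for every $c\geq 6$ we obtain the much simpler recursion
\[e(c) = 3e(c-2) + 2e(c-3).\]

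Next, I would check the base cases $c\in\{3,4,5\}$ directly from Example \ref{ex:e}: $e(3)=2$, $e(4)=2$, and $e(5)=6$, which agree with $\frac{2(2^{c-2}-(-1)^{c-2})}{3}$ evaluated at $c=3,4,5$.

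Finally, I would proceed by strong induction on $c\geq 6$. Assuming the closed formula holds for $c-2$ and $c-3$, a direct computation using $(-1)^{c-4}=(-1)^{c-2}$ and $(-1)^{c-5}=-(-1)^{c-2}$ yields
\[3\cdot\frac{2\bigl(2^{c-4}-(-1)^{c-2}\bigr)}{3} + 2\cdot\frac{2\bigl(2^{c-5}+(-1)^{c-2}\bigr)}{3} = \frac{2^{c-1}-2(-1)^{c-2}}{3}= \frac{2\bigl(2^{c-2}-(-1)^{c-2}\bigr)}{3},\]
completing the induction.

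There is no real obstacle here: the only mild subtlety is to note that summing Equation \eqref{eq:ecbrecur} over $b$ is legitimate because only finitely many terms are nonzero, so no boundary corrections arise. (Equivalently, one can observe that the characteristic polynomial $x^3-3x-2=(x-2)(x+1)^2$ has roots $2$ and $-1$ with multiplicity $2$, but the constraints already provided by the three base cases force the coefficient of $c(-1)^c$ to vanish, matching the stated closed form.)
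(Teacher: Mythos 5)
Your proposal is correct and follows essentially the same route as the paper: sum the recursion \eqref{eq:ecbrecur} over $b$ to obtain $e(c)=3e(c-2)+2e(c-3)$, verify the base cases $c=3,4,5$ from Example \ref{ex:e}, and close the induction with the same sign bookkeeping. The remark about the characteristic polynomial is a nice extra observation but not needed.
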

\begin{proof}
Equation \eqref{eq:ecbrecur} implies 
\begin{align*}
    e(c) = &\; \sum_{b=2}^{\left\lceil \frac{c+1}{2}\right\rceil} e(c,b)\\
    = & \;\sum_{b=2}^{\left\lceil \frac{c+1}{2}\right\rceil} e(c-2,b) +  2 \sum_{b=2}^{\left\lceil \frac{c+1}{2}\right\rceil} e(c-2,b-1) + 2\sum_{b=2}^{\left\lceil \frac{c+1}{2}\right\rceil} e(c-3,b-1)\\
    = & \; 3e(c-2) + 2e(c-3).
    \end{align*}
    Example \ref{ex:e} confirms the desired formula for $c=3$, $4$, and $5$. The result follows by induction since if $c\geq 6$, then
\begin{align*}
    e(c) = & \;3e(c-2) + 2e(c-3)\\
    = & \; 3\left(\frac{2\left(2^{c-4}-(-1)^{c-4}\right)}{3}\right) + 2\left(\frac{2\left(2^{c-5}-(-1)^{c-5}\right)}{3}\right)\\
    = & \; \frac{3 \left(2^{c-3}\right)-6(-1)^{c} + 2^{c-3} +4(-1)^{c}}{3}\\
    =& \; \frac{2\left(2^{c-2}-(-1)^{c-2}\right)}{3}.\\
\end{align*}
\end{proof}

In Table \ref{table:ecb}, we use Proposition \ref{prop:ecb} and Corollary \ref{cor:ec}  to compute $e(c,b)$  and $e(c)$ for small values of $c$ and $b$.
\begin{table}[h]
\label{table:ecb}
    \centering
 \begin{tabular}{||c | c c c c c c || c||} 
 \hline
 $c \backslash b$ & 2 & 3 & 4 & 5 & 6 & 7 & $e(c)$\\ [0.5ex] 
 \hline
 3 & 2 &   &   &    &   &   & 2 \\ 
 4 &    & 2 &   &   &   &   & 2  \\
 5 & 2 & 4 &   &   &   &   & 6\\
 6 &   & 6 & 4 &   &   &   & 10\\
 7 & 2 & 8 & 12 &   &   &   & 22\\ 
 8 &   & 10 & 24 & 8 &   &   & 42\\
 9 & 2 & 12 & 40 & 32 &   &   & 86\\
 10 &   & 14 & 60 & 80 & 16 &   & 170\\
 11 & 2 & 16 & 84 & 140 & 80 &   & 342\\
 12 &   & 18 & 112 & 280 & 240 & 32 & 682\\[1ex]
 \hline
 \end{tabular} 
 \caption{The table shows $e(c,b)$ and $e(c)$ for small values of the crossing number $c$ and the braid index $b$. Empty entries are $0$.}
\end{table}

We now find recursive and closed formulas for $e_p(c,b)$, the number of palindromic or anti-palindromic even continued fraction representations with crossing number $c$ and braid index $b$.
\begin{proposition}
    \label{prop:ep}
    If $c\geq 7$, then 
    \begin{equation}
    \label{eq:ep}
        e_p(c,b) = e_p(c-2,b) + 2e_p(c-4,b-2).
    \end{equation}
    For all $c$ and $b$,
    \[e_p(c,b) = \begin{cases}
    2^{\frac{b-1}{2}}\binom{\frac{c-b-1}{2}}{\frac{b-3}{2}}& \text{if $2 \leq b \leq \left\lceil\frac{c+1}{2}\right\rceil$, $c$ is even, and $b$ is odd,}\\
    2^{\frac{b}{2}}\binom{\frac{c-b-1}{2}}{\frac{b-2}{2}}&
    \text{if $2\leq b \leq \left\lceil\frac{c+1}{2}\right\rceil$, $c$ is odd, and $b$ is even,}\\
    0 & \text{otherwise.}
\end{cases}\]
\end{proposition}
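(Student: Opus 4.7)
The plan is to mirror the proof of Proposition \ref{prop:ecb}, adapting the bijective partition argument to respect the extra symmetry of palindromic or anti-palindromic tuples. A preliminary parity observation will let me treat palindromes and anti-palindromes simultaneously: for a length-$2m$ palindrome, the sign-change pattern of the second half reverses that of the first and no sign change occurs between $a_m$ and $a_{m+1}$, so $\ell$ is even, forcing $c$ to be even and $b$ to be odd; for an anti-palindrome, the second half has the reversed and negated signs and a sign change always occurs between $a_m$ and $a_{m+1}$, so $\ell$ is odd, forcing $c$ to be odd and $b$ to be even. Hence $E_p(c,b)$ consists entirely of one symmetry type, and this parity is preserved by the shifts $(c,b) \mapsto (c-2,b)$ and $(c,b) \mapsto (c-4,b-2)$ appearing on the right of \eqref{eq:ep}.

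To establish \eqref{eq:ep} for $c \geq 7$, I partition $E_p(c,b)$ according to the first entry of $\mathbf{a} = (2a_1,\ldots,2a_{2m})$:
\begin{enumerate}
\item[(I)] $|a_1| > 1$;
\item[(II)] $|a_1| = 1$ and $\sign(a_1) = \sign(a_2)$;
\item[(III)] $|a_1| = 1$ and $\sign(a_1) \neq \sign(a_2)$.
\end{enumerate}
For (I), replacing $a_1$ by $a_1 - \sign(a_1)$ and $a_{2m}$ by $a_{2m} - \sign(a_{2m})$ preserves the symmetry type (since $a_{2m} = \pm a_1$) and leaves $\ell$ unchanged, giving a bijection onto $E_p(c-4,b-2)$. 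For (II) and (III), I delete both endpoints; the resulting length-$(2m-2)$ tuple inherits the symmetry type, and by the symmetric placement of $a_1$ and $a_{2m}$, a sign change at position $(1,2)$ occurs if and only if a sign change occurs at position $(2m-1,2m)$. Consequently $\ell$ drops by $0$ in (II), landing the image in $E_p(c-4,b-2)$, and by $2$ in (III), landing the image in $E_p(c-2,b)$. In each subclass the inverse is uniquely determined by prepending a unit entry whose sign is forced by the subclass and then reflecting symmetrically. Summing the three contributions yields $e_p(c,b) = 2\, e_p(c-4,b-2) + e_p(c-2,b)$.

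The closed formula then follows by induction on $c$. The base cases $c \leq 6$ are read directly from Example \ref{ex:ep}. For $c \geq 7$, substituting the conjectured formula into \eqref{eq:ep} and factoring out $2^{(b-1)/2}$ in the palindromic case (or $2^{b/2}$ in the anti-palindromic case) reduces the right-hand side to
\[\binom{\frac{c-b-3}{2}}{\frac{b-3}{2}} + \binom{\frac{c-b-3}{2}}{\frac{b-5}{2}},\]
which collapses via Pascal's identity to $\binom{\frac{c-b-1}{2}}{\frac{b-3}{2}}$, matching the conjectured formula; the anti-palindromic case is analogous with $(b-3)/2$ and $(b-5)/2$ replaced by $(b-2)/2$ and $(b-4)/2$.

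The main obstacle will be the sign-change bookkeeping in classes (II) and (III): I need to verify carefully that the symmetry relations $a_{2m} = \pm a_1$ and $a_{2m-1} = \pm a_2$ truly force the two boundary sign-change events to be linked, so that $\ell$ always changes by $0$ or $2$ and never by $1$ upon deletion, and that each inverse map lands in the intended subclass. The parity separation between palindromes and anti-palindromes established at the outset is what allows the single recursion \eqref{eq:ep} to handle both symmetry types uniformly.
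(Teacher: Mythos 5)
Your proposal is correct and follows essentially the same route as the paper: the same three-way partition of $E_p(c,b)$ by $|a_1|$ and the sign relation between $a_1$ and $a_2$, the same three bijections (decrement both end entries, or delete both endpoints) landing in $E_p(c-4,b-2)$, $E_p(c-4,b-2)$, and $E_p(c-2,b)$ respectively, and the same induction via Pascal's identity for the closed formula. The only addition is your explicit up-front parity analysis separating palindromes from anti-palindromes, which the paper leaves implicit in the statement of the closed formula.
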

\begin{proof}
As in the proof of Proposition \ref{prop:ecb}, we first prove the recursive formula and then use the recursive formula to verify the closed formula. In order to prove the recursive formula, we partition $E_p(c,b)$ into subsets and then show that the sizes of those subsets correspond to the terms in the recursive formula. For the remainder of the proof, let $\mathbf{a}=(2a_1,2a_2,\dots,2a_{2m})$. Define
\begin{align*}
    E_{p,1}(c,b) = & \; \{\mathbf{a}\in E_p(c,b)~:~|a_1|=|a_{2m}| >1\},\\
    E_{p,2}(c,b) = & \; \{\mathbf{a}\in E_p(c,b)~:~|a_1|=|a_{2m}|=1, \sign(a_1)=\sign(a_2)\},\\
    E_{p,3}(c,b) = & \; \{\mathbf{a}\in E_p(c,b)~:~|a_1|=|a_{2m}|=1, \sign(a_1) = -\sign(a_2)\}.
\end{align*}
By the definition of $E_p(c,b)$, if $\mathbf{a}\in E_{p,2}(c,b)$, then  $\sign(a_{2m-1})=\sign(a_{2m})$, and if $\mathbf{a}\in E_{p,3}(c,b)$, then $\sign(a_{2m-1})=-\sign(a_{2m})$. Let $e_{p,i}(c,b)=|E_{p,i}(c,b)|$ for $i=1$, $2$, and $3$. Since the subsets $E_{p,i}(c,b)$ for $1\leq i \leq 3$ partition $E_p(c,b)$, it follows that $e_p(c,b) = \sum_{i=1}^3 e_{p,i}(c,b)$.

Define $f_1:E_{p,1}(c,b) \to E_p(c-4,b-2)$ by
\[f_1(\mathbf{a}) = (2a_1-2\sign(a_1),2a_2,\dots,2a_{2m-1}, 2a_{2m} -2\sign(a_{2m})).\]
Since $\mathbf{a}$ and $f_1(\mathbf{a})$ have the same number of sign changes, Theorems \ref{thm:crossing} and \ref{thm:braid} imply that $f_1(\mathbf{a})\in E_p(c-4,b-2)$. Because $f_1$ is a bijection, it follows that $e_{p,1}(c,b) = e_p(c-4,b-2)$.

Define $f_2:E_{p,2}(c,b)\to E_p(c-4,b-2) $ by $f_2(\mathbf{a})=(2a_2,2a_3,\dots,2a_{2m-2},2a_{2m-1})$. Since $\mathbf{a}$ and $f_2(\mathbf{a})$ have the same number of sign changes, Theorems \ref{thm:crossing} and \ref{thm:braid} imply that $f_2(\mathbf{a})\in E_p(c-4,b-2)$. Because $f_2$ is a bijection, it follows that $e_{p,2}(c,b)=e_p(c-4,b-2)$. 

Define $f_3:E_{p,3}(c,b)\to E_p(c-2,b) $ by $f_3(\mathbf{a})=(2a_2,2a_3,\dots,2a_{2m-2},2a_{2m-1})$. Since $\mathbf{a}$ has two more sign changes than $f_3(\mathbf{a})$, Theorems \ref{thm:crossing} and \ref{thm:braid} imply that $f_3(\mathbf{a})\in E_p(c-2,b)$. Because $f_3$ is a bijection, it follows that $e_{p,3}(c,b)=e_p(c-2,b)$. 

Therefore
\[e_p(c,b) = \sum_{i=1}^3 e_{p,i}(c,b) = e_p(c-2,b) + 2e_p(c-4,b-2),\]
which proves Equation \eqref{eq:ep}. 

In order to prove the closed formula for $e_p(c,b)$, we first observe that Example \ref{ex:ep} confirms the formula when $c<7$. Now proceed by induction on $c$. Since $e(c,0)=e(c,1)=0$ for all $c$, it follows that $e_p(c,0)=e_p(c,1)=0$ for all $c$. 

Suppose that $c\geq 7$, $2\leq b \leq \left\lceil\frac{c+1}{2}\right\rceil$, $c$ is even, and $b$ is odd. Then Equation \eqref{eq:ep} implies
    \begin{align*}
        e_p(c,b) = & \; e_p(c-2,b) + 2e_p(c-4,b-2)\\
         = & \; 2^{\frac{b-1}{2}}\binom{\frac{c-b-3}{2}}{\frac{b-3}{2}}+2\left(2^{\frac{b-3}{2}}\right)\binom{\frac{c-b-3}{2}}{\frac{b-5}{2}}\\
         = & \; 2^{\frac{b-1}{2}}\left(\binom{\frac{c-b-3}{2}}{\frac{b-3}{2}} + \binom{\frac{c-b-3}{2}}{\frac{b-5}{2}}\right)\\
         = & \; 2^{\frac{b-1}{2}}\binom{\frac{c-b-1}{2}}{\frac{b-3}{2}}.
    \end{align*}
Suppose that $c\geq 7$, $2\leq b \leq \left\lceil\frac{c+1}{2}\right\rceil$, $c$ is odd, and $b$ is even. Equation \eqref{eq:ep} implies
\begin{align*}
    e_p(c,b) = & \; e_p(c-2,b) + 2e_p(c-4,b-2)\\
    = & \; 2^{\frac{b}{2}}\binom{\frac{c-b-3}{2}}{\frac{b-2}{2}}+2 \left(2^{\frac{b-2}{2}}\right) \binom{\frac{c-b-3}{2}}{\frac{b-4}{2}}\\
    = & \; 2^{\frac{b}{2}}\left(\binom{\frac{c-b-3}{2}}{\frac{b-2}{2}} + \binom{\frac{c-b-3}{2}}{\frac{b-4}{2}} \right)\\
    = & \; 2^{\frac{b}{2}}\binom{\frac{c-b-1}{2}}{\frac{b-2}{2}}.
\end{align*}
For all other values of $c$ and $b$, both terms $e_p(c-2,b)$ and $e_p(c-4,b-2)$ are $0$, and Equation \eqref{eq:ep} implies the result.  
\end{proof}

As a corollary, we compute the number $e_p(c)$ of elements of the set $E_p(c)$.
\begin{corollary}
\label{cor:ep}
    For each $c\geq 3$, the number of elements of $E_p(c)$ is 
    \[e_p(c) = 
    \frac{2\left(2^{\left\lfloor\frac{c-1}{2}\right\rfloor}-(-1)^{\left\lfloor\frac{c-1}{2}\right\rfloor}\right)}{3}. 
\]
\end{corollary}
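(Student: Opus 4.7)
The plan is to mimic the proof of Corollary \ref{cor:ec}: sum the recursive formula of Proposition \ref{prop:ep} over $b$ to obtain a recursion for $e_p(c)$, check small base cases against Example \ref{ex:ep}, and then do a straightforward induction.

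First, I would sum Equation \eqref{eq:ep} over all $b$ in the range $2 \leq b \leq \left\lceil (c+1)/2 \right\rceil$. Because the index shift in $2 e_p(c-4,b-2)$ just reindexes the sum over the same support (terms outside the support contribute $0$), this yields the clean recursion
\[e_p(c) = e_p(c-2) + 2 e_p(c-4) \quad \text{for } c \geq 7.\]

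Next I would read off the base cases $e_p(3)=e_p(4)=e_p(5)=e_p(6)=2$ from Example \ref{ex:ep}, and check directly that the closed formula gives $2$ in each of these four cases (for instance $\lfloor (c-1)/2\rfloor$ equals $1,1,2,2$ respectively, and $\tfrac{2(2-(-1))}{3} = \tfrac{2(4-1)}{3} = 2$).

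For the inductive step, set $n = \left\lfloor (c-1)/2 \right\rfloor$, so that $\left\lfloor (c-3)/2 \right\rfloor = n-1$ and $\left\lfloor (c-5)/2 \right\rfloor = n-2$. The inductive hypothesis gives
\[e_p(c-2) + 2 e_p(c-4) = \frac{2\bigl(2^{n-1} - (-1)^{n-1}\bigr)}{3} + \frac{4\bigl(2^{n-2} - (-1)^{n-2}\bigr)}{3},\]
and combining the powers of $2$ and the signs separately (using $(-1)^{n-1} = -(-1)^n$ and $(-1)^{n-2} = (-1)^n$) collapses this to $\tfrac{2(2^n - (-1)^n)}{3}$, matching the claimed formula.

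The only non-routine point is justifying the summation step cleanly: one needs to check that the ranges of $b$ for which $e_p(c,b)$, $e_p(c-2,b)$, and $e_p(c-4,b-2)$ are nonzero line up so that summing \eqref{eq:ep} over $b$ really produces $e_p(c) = e_p(c-2) + 2e_p(c-4)$ with no leftover boundary terms. This is easily handled by noting that $e_p(c,b) = 0$ whenever $b \leq 1$ or $b > \left\lceil (c+1)/2 \right\rceil$, so the sums may be extended over all integers $b$ without changing anything, after which the reindexing in the second sum is immediate. Everything else is a direct computation.
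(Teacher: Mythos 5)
Your proposal is correct and follows essentially the same route as the paper: sum Equation \eqref{eq:ep} over $b$ to get $e_p(c)=e_p(c-2)+2e_p(c-4)$ for $c\geq 7$, verify $e_p(3)=e_p(4)=e_p(5)=e_p(6)=2$ from Example \ref{ex:ep}, and close the induction by combining the powers of $2$ and the signs. The extra care you take in justifying that the reindexed sum has no boundary terms is a detail the paper passes over silently, but it does not change the argument.
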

\begin{proof}
    Equation \eqref{eq:ep} implies
    \[e_p(c) = \sum_{b=2}^{\left\lceil\frac{c+1}{2}\right\rceil} e_p(c,b) = \sum_{b=2}^{\left\lceil\frac{c+1}{2}\right\rceil} e_p(c-2,b)+2e_p(c-4,b-2) = e_p(c-2)+2e_p(c-4). \]
    Example \ref{ex:ep} shows that the desired formula for $e_p(c)$ holds when $3\leq c \leq 6$. When $c\geq 7$, the desired result follows via induction because
    \begin{align*}
        e_p(c) = & \; e_p(c-2) + 2e_p(c-4)\\
        = & \; \frac{2\left(2^{\left\lfloor\frac{c-3}{2}\right\rfloor}-(-1)^{\left\lfloor\frac{c-3}{2}\right\rfloor}\right)}{3} + 2\left(\frac{2\left(2^{\left\lfloor\frac{c-5}{2}\right\rfloor}-(-1)^{\left\lfloor\frac{c-5}{2}\right\rfloor}\right)}{3}\right)\\
        = & \; \frac{2^{\left\lfloor\frac{c-1}{2}\right\rfloor}+2(-1)^{\left\lfloor\frac{c-1}{2}\right\rfloor} + 2^{\left\lfloor\frac{c-1}{2}\right\rfloor} - 4(-1)^{\left\lfloor\frac{c-1}{2}\right\rfloor}}{3}\\
        = & \;   \frac{2\left(2^{\left\lfloor\frac{c-1}{2}\right\rfloor}-(-1)^{\left\lfloor\frac{c-1}{2}\right\rfloor}\right)}{3}. 
    \end{align*}
\end{proof}

In Table \ref{table:epcb}, we use Proposition \ref{prop:ep} and Corollary \ref{cor:ep} to compute $e_p(c,b)$ and $e_p(c)$ for small values of $c$ and $b$. 
\begin{table}[h!]
\label{table:epcb}
\centering
 \begin{tabular}{|| c | c c c c c c || c ||} 
 \hline
  $c \backslash b$ & 2 & 3 & 4 & 5 & 6 & 7 & $e_p(c)$ \\ [0.5ex] 
 \hline
 3 & 2 &&&&&& 2\\
 4 & & 2 &&&&& 2\\
 5 & 2 &&&&& & 2\\
 6 & & 2 &&&& & 2\\
 7 & 2 & & 4 &&& & 6\\
 8 & & 2 & & 4 && & 6\\
 9 & 2 & & 8 &&& & 10\\
 10 & & 2 & & 8 && & 10\\
 11 & 2 & & 12 & & 8 & & 22\\
 12 & & 2 & & 12 & & 8 & 22 \\
 \hline
 \end{tabular} 
 \caption{The table shows $e_p(c,b)$ and $e_p(c)$ for small values of the crossing number $c$ and the braid index $b$. Empty entries are 0.}
\end{table}
Propositions \ref{prop:ecb} and \ref{prop:ep} allow us to prove Theorem \ref{thm:number}.
\begin{proof}[Proof of Theorem \ref{thm:number}]
Theorem \ref{thm:even} implies each $2$-bridge knot with crossing number $c$ is represented by either two or four elements in $E(c)$ depending on whether the corresponding elements are in $E_p(c)$ or not, respectively. Therefore the number $k_{c,b}$ of $2$-bridge knots with crossing number $c$ and braid index $b$ is $\frac{1}{4}(e(c,b) + e_p(c,b))$. Propositions \ref{prop:ecb} and \ref{prop:ep} imply the result.
\end{proof} 

Table \ref{table:kcb} shows the number $k_{c,b}$ of 2-bridge knots with crossing number $c$ and braid index $b$ for $3\leq c \leq 20$.
\begin{table}[h]
    \label{table:kcb}
    \begin{tabular}{||c | c c c c c c c c c c||}
    \hline
    $c \backslash b$ & 2 & 3 & 4 & 5 & 6 & 7 & 8 & 9 & 10 & 11\\
    \hline
    \hline
    3 & 1 & & & & & & & & &    \\
    4 &  & 1 & & & & & & & &    \\
    5 & 1 & 1 & & & & & & & &    \\
    6 &  & 2 & 1 & & & & & & &    \\
    7 & 1 & 2 & 4 & & & & & & &    \\
    8 & & 3 & 6 & 3 & & & & & &    \\
    9 & 1 & 3 & 12 & 8 & & & & & &    \\
    10 & & 4 & 15 & 22 & 4 & & & & &    \\
    11 & 1 & 4 & 24 & 40 & 22 & & & & &    \\
    12 & & 5 & 28 & 73 & 60 & 10 & & & &    \\
    13 & 1 & 5 & 40 & 112 & 146 & 48 & & & &    \\
    14 & & 6 & 45 & 172 & 280 & 174 & 16 & & &    \\
    15 & 1 & 6 & 60 & 240 & 516 & 448 & 116 & & &    \\
    16 & & 7 & 66 & 335 & 840 & 1,020 & 448 & 36 & &    \\
    17 & 1 & 7 & 84 & 440 & 1,340 & 2,016 & 1,360 & 256 & &   \\
    18 & & 8 & 91 & 578 & 1,980 & 3,716 & 3,360 & 1,168 & 64 &    \\
    19 & 1 & 8 & 112 & 728 & 2,890 & 6,336 & 7,432 & 3,840 & 584 &    \\
    20 & & 9 & 120 & 917 & 4,004 & 10,326 & 14,784 & 10,600 & 2,880 & 136    \\
    \hline
    \end{tabular}
    \caption{The number $k_{c,b}$ of 2-bridge knots with crossing number $c$ and braid index $b$. Empty entries are 0.}
\end{table}

\section{The mode of the braid indices of 2-bridge knots}
\label{sec:mode}
In this section, we find the mode of the braid indices of 2-bridge knots of a fixed crossing number $c$, proving Theorem \ref{thm:mode}. We begin by proving two properties of the finite sequence $(e(c,b))_{b=2}^n$ where $n={\left\lceil\frac{c+1}{2}\right\rceil}$.

A finite sequence $(a_1,\dots,a_n)$ is \textit{log concave} if $a_i^2\geq a_{i-1}a_{i+1}$ for $1<i<n$ and \textit{unimodal} if there is an index $m$ such that $a_1\leq a_2\leq\cdots\leq a_{m-1}\leq a_m \geq a_{m+1}\geq \cdots \geq a_{n-1}\geq a_n$. Log concave sequences are known to be unimodal. Our strategy to prove Theorem \ref{thm:mode} is to first show that the sequence $(e(c,b))_{b=2}^n$ where $n={\left\lceil\frac{c+1}{2}\right\rceil}$ is log concave with mode $b = \left\lceil\frac{c}{3}\right\rceil+1$, and then show that $e_p(c,b)$ is small enough in comparison to $e(c,b)$ so that the mode of the finite sequence $(k_{c,b})_{b=2}^n=\left(\frac{1}{4}(e(c,b)+e_p(c,b))\right)_{b=2}^n$ is also $b = \left\lceil\frac{c}{3}\right\rceil+1$.

Any sequence of length two or shorter is log concave. The first time the sequence of $e(c,b)$ for a fixed $c$ has length at least three is when $c=7$.
\begin{lemma}
\label{lem:logconcave}
    For each $c\geq 7$, the sequence $(e(c,b))_{b=2}^n$ where $n={\left\lceil\frac{c+1}{2}\right\rceil}$ is log concave.
\end{lemma}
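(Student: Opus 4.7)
The approach is to invoke the closed-form formula from Proposition \ref{prop:ecb} and reduce the log concavity assertion to an elementary inequality among binomial coefficients. Let $n=\lceil(c+1)/2\rceil$. Log concavity of the sequence means $e(c,b)^2 \geq e(c,b-1)\,e(c,b+1)$ for $3 \leq b \leq n-1$. I would begin by isolating the boundary case $b=3$, since $e(c,2)$ follows a different formula from the generic one. If $c$ is even, then $e(c,2)=0$ and the inequality is immediate. If $c$ is odd, the inequality becomes $(2(c-3))^{2} \geq 2\cdot 2(c-4)(c-5)$, which simplifies to $(c-3)^{2}\geq(c-4)(c-5)$ and holds for all $c\geq 7$.

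For the remaining range $4 \leq b \leq n-1$, the uniform closed formula $e(c,b) = 2^{b-2}\binom{c-b}{b-2}$ applies to all three terms. After canceling the common factor $2^{2b-4}$, the log concavity inequality becomes
\[\binom{c-b}{b-2}^{2} \;\geq\; \binom{c-b+1}{b-3}\binom{c-b-1}{b-1}.\]
Setting $u=c-b$ and $v=b-2$, I would rewrite this as $\binom{u}{v}^{2} \geq \binom{u+1}{v-1}\binom{u-1}{v+1}$ and expand both sides as ratios of factorials. Clearing denominators reduces the claim to
\[u(v+1)(u-v+1)(u-v+2) \;\geq\; (u+1)\,v\,(u-v)(u-v-1).\]
Using the identity $u(v+1)-(u+1)v=u-v$, the difference of the two sides rearranges into the form $(u-v)^{3} + (u-v)(4uv+3u+v) + 2u(v+1)$. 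The index constraint $b \leq \lceil(c+1)/2\rceil$ forces $u \geq v \geq 0$ (since $c \geq 2b-2$), so every summand is manifestly nonnegative and the inequality holds.

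The main obstacle is simply the bookkeeping at the endpoint: because $e(c,2)$ does not fit the generic binomial formula, the case $b=3$ must be handled separately, and one must confirm that $b-1$ and $b+1$ remain within the valid range of the closed formula whenever $4 \leq b \leq n-1$. Once these technicalities are in order, the underlying algebraic fact is a short polynomial identity that displays as a sum of nonnegative terms, so no deeper combinatorial input is required.
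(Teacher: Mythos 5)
Your proof is correct and takes essentially the same route as the paper: isolate $b=3$ because $e(c,2)$ follows a different formula, then reduce the range $4\leq b\leq n-1$ to a binomial-coefficient inequality via the closed formula of Proposition \ref{prop:ecb}. The only cosmetic difference is that the paper shows the ratio $e(c,b)^2/\bigl(e(c,b-1)e(c,b+1)\bigr)$ is at least $1$ by factoring it as a product of two fractions each at least $1$, whereas you show the difference is nonnegative by writing it as a sum of nonnegative terms; both arguments turn on the same constraint $c\geq 2b-2$ coming from $b\leq\left\lceil\frac{c+1}{2}\right\rceil$.
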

\begin{proof}
    If $b=3$ and $c$ is even, then $e(c,3)^2\geq 0 = e(c,2)e(c,4)$. If $b=3$ and $c$ is odd, then 
    \begin{align*}
    e(c,3)^2 = & \; \left(2\binom{c-3}{1}\right)^2 = 4 c^2 - 24 c + 36~\text{and}\\
    e(c,2)e(c,4) =  &\; 2\cdot 2^2\binom{c-4}{2} = 4c^2-36c+80.
    \end{align*}
    Since $e(c,3)^2=4c^2-24c+36\geq 4c^2-36c+80$ when $c\geq 4$, the result follows for $b=3$.

    Let $b>3$. Then
    \begin{align*}
        \frac{e(c,b)^2}{e(c,b-1)e(c,b+1)} = & \; \frac{2^{2b-4}\binom{c-b}{b-2}^2}{2^{2b-4}\binom{c-b+1}{b-3}\binom{c-b-1}{b-1}}\\
        = & \; \frac{(c-b)!^2(b-3)!(c-2b+4)!(b-1)!(c-2b)!}{(c-b+1)!(c-b-1)!(b-2)!^2 (c-2b+2)!^2}\\
        = & \;\left(\frac{(c-b)(b-1)}{(c-b+1)(b-2)}\right)\left(\frac{(c-2b+4)(c-2b+3)}{(c-2b+2)(c-2b+1)}\right).\\
    \end{align*}
    The second fraction in the above product is clearly greater than one. Next, we show that the first fraction in the above product is at least one. Since $b\le n$, it follows that $b\leq\frac{c+2}{2}$ or $2b-2\le c$. Therefore $3b-2c-2\leq b-c$, and it follows that
    \[(c-b+1)(b-2)= cb-2c-b^2+3b-2 \leq cb-c-b^2+b = (c-b)(b-1).\]
    Thus the quotient $\frac{e(c,b)^2}{e(c,b-1)e(c,b+1)}$ is at least one, and therefore the sequence $(e(c,b))_{b=2}^n$ is log concave.
\end{proof}

The next lemma gives the mode of the sequence $(e(c,b))_{b=2}^n$ where $n={\left\lceil\frac{c+1}{2}\right\rceil}$.
\begin{lemma}
\label{lem:emode}
    For each $c\geq 7$, the largest term in the sequence $(e(c,b))_{b=2}^n$ where $n={\left\lceil\frac{c+1}{2}\right\rceil}$ is $e\left(c,\left\lceil\frac{c}{3}\right\rceil+1\right)$.
\end{lemma}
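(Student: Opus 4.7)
The plan is to exploit the log-concavity established in Lemma \ref{lem:logconcave}. Log-concavity implies both unimodality and that the ratios $e(c,b)/e(c,b-1)$ are non-increasing in $b$. Consequently, to identify the maximum of the sequence it suffices to locate the largest $b$ for which this ratio is at least $1$, and to show that this $b$ equals $b^* := \lceil c/3 \rceil + 1$.

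Since $c \geq 7$ forces $b^* \geq 4$, I invoke the closed form $e(c,b) = 2^{b-2}\binom{c-b}{b-2}$ from Proposition \ref{prop:ecb} at both $b^*-1$ and $b^*$. A direct cancellation gives, for $b \geq 4$ in the valid range,
\[\frac{e(c,b)}{e(c,b-1)} = \frac{2(c-2b+4)(c-2b+3)}{(c-b+1)(b-2)}.\]
It then suffices to verify the two inequalities $e(c,b^*)/e(c,b^*-1) \geq 1$ and $e(c,b^*+1)/e(c,b^*) < 1$, with the second only required when $b^* + 1 \leq n$.

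The remaining work is a case analysis on $c \bmod 3$: writing $c = 3k$, $c = 3k+1$, or $c = 3k+2$ fixes $b^*$ as $k+1$, $k+2$, or $k+2$ respectively. In each case both inequalities reduce to elementary polynomial inequalities in $k$. For instance, in the case $c = 3k$ with $b^* = k+1$, the first inequality becomes $2(k+2)(k+1) \geq 2k(k-1)$, i.e.\ $8k+4 \geq 0$, while the second becomes $2k(k-1) < (2k-1)k$, i.e.\ $k \geq 1$; the other two residue classes are entirely analogous. The main obstacle is organizational rather than computational, namely handling the ceiling function uniformly across the three residue classes and accounting for the boundary case $c = 7$, in which $b^* = n$ and the second inequality is vacuous.
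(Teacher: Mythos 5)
Your argument is correct and follows essentially the same route as the paper's: both reduce, via the log-concavity of Lemma \ref{lem:logconcave}, to comparing $e\left(c,\left\lceil\frac{c}{3}\right\rceil+1\right)$ with its two neighbors using the closed form from Proposition \ref{prop:ecb}, and then split into cases according to $c \bmod 3$. The only difference is cosmetic — you verify the two comparisons via the ratio $e(c,b)/e(c,b-1)$ (using that log-concavity makes these ratios non-increasing), whereas the paper computes the two differences explicitly in Equations \eqref{eq:dif1} and \eqref{eq:dif2} and checks they are positive.
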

\begin{proof}
    Since the sequence $(e(c,b))_{b=2}^n$ where $n={\left\lceil\frac{c+1}{2}\right\rceil}$ is unimodal, it suffices the show that
    \[e\left(c,\left\lceil\frac{c}{3}\right\rceil\right)\leq e\left(c,\left\lceil\frac{c}{3}\right\rceil+1\right)\geq e\left(c,\left\lceil\frac{c}{3}\right\rceil+2\right).\]
    When $c\geq 8$, Proposition \ref{prop:ecb} implies the difference $e\left(c,\left\lceil\frac{c}{3}\right\rceil+1\right)-e\left(c,\left\lceil\frac{c}{3}\right\rceil\right)$ is given by
    \begin{equation}
    \label{eq:dif1}    
    e\left(c,\left\lceil\frac{c}{3}\right\rceil+1\right)-e\left(c,\left\lceil\frac{c}{3}\right\rceil\right) = \begin{cases}
        \displaystyle \frac{2^{r-1}}{r}\binom{2r+1}{r-1} & \text{if $c=3r$,}\\
        \displaystyle \frac{2^{r-1}}{r}\binom{2r}{r-1}& \text{if $c=3r+1$,}\\
        \displaystyle \frac{2^{r-1}(5r+4)}{(r+1)(r+2)}\binom{2r}{r} & \text{if $c=3r+2$.}
    \end{cases}
    \end{equation}
    When $c\geq 8$, Proposition \ref{prop:ecb} also implies the difference $e\left(c,\left\lceil\frac{c}{3}\right\rceil+1\right)-e\left(c,\left\lceil\frac{c}{3}\right\rceil+2\right)$ is given by
    \begin{equation}
    \label{eq:dif2}    
    e\left(c,\left\lceil\frac{c}{3}\right\rceil+1\right)-e\left(c,\left\lceil\frac{c}{3}\right\rceil+2\right) = 
    \begin{cases}
        \displaystyle \frac{2^{r-1}}{r}\binom{2r-2}{r-1}&\text{if $c=3r$,}\\
        \displaystyle \frac{2^{r}(7r-5)}{r(r+1)}\binom{2r-2}{r-1}&\text{if $c=3r+1$,}\\
        \displaystyle \frac{2^{r+2}}{r+1}\binom{2r-1}{r-1}&\text{if $c=3r+2$.}
    \end{cases}
    \end{equation}
    Since both $e\left(c,\left\lceil\frac{c}{3}\right\rceil+1\right)-e\left(c,\left\lceil\frac{c}{3}\right\rceil\right)$ and  $e\left(c,\left\lceil\frac{c}{3}\right\rceil+1\right)-e\left(c,\left\lceil\frac{c}{3}\right\rceil+2\right)$ are positive, the result follows.
\end{proof}

We use Lemmas \ref{lem:logconcave} and \ref{lem:emode} to prove Theorem \ref{thm:mode}. The inequality
\begin{equation}
    \label{eq:binom}
    \left(\frac{n}{k}\right)^k \leq \binom{n}{k}
\end{equation}
for $0\leq k \leq n$ will be useful in our proof.

\begin{proof}[Proof of Theorem \ref{thm:mode}] Lemmas \ref{lem:logconcave} and \ref{lem:emode} imply that the sequence $(e(c,b))_{b=2}^n$ where $n={\left\lceil\frac{c+1}{2}\right\rceil}$ is unimodal with mode  $e\left(c,\left\lceil\frac{c}{3}\right\rceil+1\right)$. Since $k_{c,b}=\frac{1}{4}(e(c,b)+e_p(c,b))$, to show that $k_{c,\left\lceil c/3\right\rceil+1}\geq k_{c,b}$ for $2\leq b \leq \left\lceil\frac{c+1}{2}\right\rceil$, it suffices to show that $e(c,\left\lceil\frac{c}{3}\right\rceil+1)+e_p(c,\left\lceil\frac{c}{3}\right\rceil+1) \geq e(c,b)+e_p(c,b)$ for $2\leq b \leq \left\lceil\frac{c+1}{2}\right\rceil.$ Since
\[e\left(c,\left\lceil\frac{c}{3}\right\rceil+1\right) - e(c,b) \geq \min\left\{ e\left(c,\left\lceil\frac{c}{3}\right\rceil+1\right) - e\left(c,\left\lceil\frac{c}{3}\right\rceil\right),e\left(c,\left\lceil\frac{c}{3}\right\rceil+1\right)-e\left(c,\left\lceil\frac{c}{3}\right\rceil+2\right)\right\}\]
and $e_p(c,b)-e_p\left(c,\left\lceil\frac{c}{3}\right\rceil+1\right)\leq e_p(c)$ for $2\leq b \leq \left\lceil\frac{c+1}{2}\right\rceil$, it suffices to show that 
\begin{align}
\label{eq:mode1}    
e_p(c)\leq & \; e\left(c,\left\lceil\frac{c}{3}\right\rceil+1\right) - e\left(c,\left\lceil\frac{c}{3}\right\rceil\right)~\text{and}\\
\label{eq:mode2}
e_p(c) \leq & \;  e\left(c,\left\lceil\frac{c}{3}\right\rceil+1\right) - e\left(c,\left\lceil\frac{c}{3}\right\rceil+2\right).
\end{align}
We use Equations \eqref{eq:dif1} and \eqref{eq:dif2} to show that Inequalities \eqref{eq:mode1} and \eqref{eq:mode2} hold for $c\geq 34$. For $c\leq 34$, we use Theorem \ref{thm:number} to check that $k_{c,\left\lceil c/3\right\rceil+1}\geq k_{c,b}$ for $2\leq b \leq \left\lceil\frac{c+1}{2}\right\rceil.$

Suppose that $c=3r$. Corollary \ref{cor:ep} implies that $e_p(c) \leq \frac{1}{3}\left(2^{\frac{c+2}{2}}\right) = \frac{1}{3}\left(2^{\frac{3r+2}{2}}\right)$. Equation \eqref{eq:dif1} and Inequality \eqref{eq:binom} imply that
\[
   e\left(c,\left\lceil\frac{c}{3}\right\rceil+1\right)-e\left(c,\left\lceil\frac{c}{3}\right\rceil\right) = \frac{2^{r-1}}{r}\binom{2r+1}{r-1}
   \geq  \left(\frac{2^{r-1}}{r}\right) \left(\frac{2r+1}{r-1}\right)^{r-1}
   \geq  \frac{2^{2r-2}}{r}.
\]
Since $\frac{2^{2r-2}}{r} \geq \frac{1}{3}\left(2^{\frac{3r+2}{2}}\right)$ when $r\geq 10$, Inequality \eqref{eq:mode1} holds with $c=3r\geq 30$. Equation \eqref{eq:dif2} and Inequality \eqref{eq:binom} imply that  
\[e\left(c,\left\lceil\frac{c}{3}\right\rceil+1\right)-e\left(c,\left\lceil\frac{c}{3}\right\rceil+2\right) = \frac{2^{r-1}}{r}\binom{2r-2}{r-1} \geq \left(\frac{2^{r-1}}{r}\right)\left(\frac{2r-2}{r-1}\right)^{r-1} = \frac{2^{2r-2}}{r}.\]
Since $\frac{2^{2r-2}}{r} \geq \frac{1}{3}\left(2^{\frac{3r+2}{2}}\right)$ when $r\geq 10$, Inequality \eqref{eq:mode2} holds with $c=3r\geq 30$.

Suppose that $c=3r+1$. Corollary \ref{cor:ep} implies that $e_p(c)\leq \frac{1}{3}\left(2^{\frac{c+2}{2}}\right)=\frac{1}{3}\left(2^{\frac{3r+3}{2}}\right).$  Equation \eqref{eq:dif1} and Inequality \eqref{eq:binom} imply that  
\[e\left(c,\left\lceil\frac{c}{3}\right\rceil+1\right)-e\left(c,\left\lceil\frac{c}{3}\right\rceil\right) = \frac{2^{r-1}}{r}\binom{2r}{r-1} \geq \left(\frac{2^{r-1}}{r}\right) \left(\frac{2r}{r-1}\right)^{r-1}\geq \frac{2^{2r-2}}{r}.\]
Since $\frac{2^{2r-2}}{r}\geq\frac{1}{3}\left(2^{\frac{3r+3}{2}}\right)$ when $r\geq 11$, Inequality \eqref{eq:mode1} holds with $c=3r+1\geq 34$. Equation \eqref{eq:dif2} and Inequality \eqref{eq:binom} imply that  
\[e\left(c,\left\lceil\frac{c}{3}\right\rceil+1\right)-e\left(c,\left\lceil\frac{c}{3}\right\rceil+2\right) =  \frac{2^{r}(7r-5)}{r(r+1)}\binom{2r-2}{r-1} \geq \left(
\frac{2^{r}(7r-5)}{r(r+1)}\right)\left(\frac{2r-2}{r-1}\right)^{r-1}  = \frac{2^{2r-1}(7r-5)}{r(r+1)}.\]
Since $\frac{2^{2r-1}(7r-5)}{r(r+1)}\geq \frac{1}{3}\left(2^{\frac{3r+3}{2}}\right)$ when $r\geq 4$, Inequality \eqref{eq:mode2} holds with $c=3r+1\geq 13$.

Suppose that $c=3r+2$.  Corollary \ref{cor:ep} implies that $e_p(c)\leq \frac{1}{3}\left(2^{\frac{c+2}{2}}\right)=\frac{1}{3}\left(2^{\frac{3r+4}{2}}\right).$ Equation \eqref{eq:dif1} and Inequality \eqref{eq:binom} imply that  
\[e\left(c,\left\lceil\frac{c}{3}\right\rceil+1\right)-e\left(c,\left\lceil\frac{c}{3}\right\rceil\right) = \frac{2^{r-1}(5r+4)}{(r+1)(r+2)}\binom{2r}{r} \geq \left( \frac{2^{r-1}(5r+4)}{(r+1)(r+2)}\right) \left( \frac{2r}{r}\right)^r =  \frac{2^{2r-1}(5r+4)}{(r+1)(r+2)}.\]
Since $\frac{2^{2r-1}(5r+4)}{(r+1)(r+2)} \geq \frac{1}{3}\left(2^{\frac{3r+4}{2}}\right)$ when $r\geq 3$. Inequality \eqref{eq:mode1} holds with $c=3r+2\geq 11$. Equation \eqref{eq:dif2} and Inequality \eqref{eq:binom} imply that  
\[e\left(c,\left\lceil\frac{c}{3}\right\rceil+1\right)-e\left(c,\left\lceil\frac{c}{3}\right\rceil+2\right) =  \frac{2^{r+2}}{r+1}\binom{2r-1}{r-1} \geq \left(\frac{2^{r+2}}{r+1}\right)\left(\frac{2r-1}{r-1}\right)^r\geq \frac{2^{2r+1}}{r+1}.\]
Since $ \frac{2^{2r+1}}{r+1} \geq \frac{1}{3}\left(2^{\frac{3r+4}{2}}\right)$ when $r\geq 2$. Inequality \eqref{eq:mode2} holds with $c=3r+2\geq 8$.

Therefore if $c\geq 34$, both Inequality \eqref{eq:mode1} and Inequality \eqref{eq:mode2} hold, and the desired result follows. The result follows for $c<34$ by direct computation of $k_{c,b}$ using Theorem \ref{thm:number}.
\end{proof}

Using better bounds in the above proof can eliminate the need to use Theorem \ref{thm:number} and direct computation for many values of $c$. However, the proof using better bounds is significantly longer. The quantity $e_p(c)$ will typically be much larger than $e_p(c,b)-e_p\left(c,\left\lceil\frac{c}{3}\right\rceil+1\right)$, and the upper bound $e_p(c)$ could be replaced by $e_p(c,m_c)$ where $m_c$ is the mode of the sequence $(e_p(c,b))_{b=2}^{n}$. For the sake of brevity, we forego this strategy.

The \text{median} of a finite sequence $(a_1,\dots, a_n)$ of non-negative integers is the index $m$ such that $\sum_{i=1}^m a_i \geq \frac{1}{2}\sum_{i=1}^n a_i$ and $\sum_{i=m}^n a_i\geq \frac{1}{2}\sum_{i=1}^n a_i$. If no such index $m$ exists, then there is an index $m'$ such that $\sum_{i=1}^{m'} a_i = \sum_{i=m'+1}^n a_i = \frac{1}{2} \sum_{i=1}^n a_i$, and in this case, the median is defined as $m'+\frac{1}{2}$. We conjecture that the median of the sequence $(k_{c,b})_{b=2}^n$ where $n=\left\lceil\frac{c+1}{2}\right\rceil$ is the same as its mode. We have confirmed this conjecture for all crossing numbers $c$ with $c\leq 10,000$. 
\begin{conjecture}
    \label{conj:median}
    Let $c\geq 3$, and let $n=\left\lceil\frac{c+1}{2}\right\rceil$. The median of the sequence $(k_{c,b})_{b=2}^n=(k_{c,2},k_{c,3},\dots,k_{c,n})$ of the number of 2-bridge knots with crossing number $c$ and braid index $b$ is $b=\left\lceil \frac{c}{3}\right\rceil +1.$
\end{conjecture}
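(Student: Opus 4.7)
The plan is to reduce Conjecture \ref{conj:median} to an asymptotic statement about the dominant quantity $e(c,b)$, and then to establish the median property via a Gaussian local-limit analysis combined with exact verification in small cases. Write $b^*(c) := \lceil c/3\rceil + 1$ throughout.

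First, I would reduce to the sequence $(e(c,b))_{b=2}^n$. Since $k_{c,b} = \frac{1}{4}(e(c,b) + e_p(c,b))$, and Corollaries \ref{cor:ec} and \ref{cor:ep} give $e(c) = \Theta(2^c)$ while $e_p(c) = O(2^{c/2})$, any partial-sum comparison for the $k_{c,b}$'s differs from the corresponding comparison for the $e(c,b)$'s by at most $O(2^{c/2})$. So it is enough to show that
\[
 \sum_{b=2}^{b^*(c)-1} e(c,b) \;\le\; \sum_{b=b^*(c)+1}^{n} e(c,b) \;+\; O(2^{c/2})
\]
(and the symmetric reverse inequality), with an explicit quantitative gap that dominates $2^{c/2}$.

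Second, I would extract this quantitative gap from a local central limit theorem. Using the closed form $e(c,b)=2^{b-2}\binom{c-b}{b-2}$ from Proposition \ref{prop:ecb} and Stirling, writing $b=b^*(c)+k$, one gets
\[
 \frac{e(c,b^*(c)+k)}{e(c,b^*(c))} \;=\; \exp\!\left(-\frac{27 k^2}{4 c} + O(k/c) + O(k^3/c^2)\right),
\]
a Gaussian profile with variance $\sigma^2 = 2c/27$, consistent with Theorem \ref{thm:variance}. The peak position is pinned exactly at $b^*(c)$ by Theorem \ref{thm:mode}, and the leading quadratic term is symmetric, so the contribution of the cubic skewness correction to the mass to the left vs.\ to the right of $b^*$ is at most $O(e(c)/\sqrt{c})$, whereas the central term $e(c,b^*(c))$ is $\Theta(e(c)/\sqrt{c})$. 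Thus the ``mass imbalance'' across $b^*$ is $O(e(c)/\sqrt{c}) = O(2^c/\sqrt{c})$, which crucially dominates the palindromic error $O(2^{c/2})$ for large $c$. Combined with the log-concavity of Lemma \ref{lem:logconcave} and the explicit differences in Equations \eqref{eq:dif1}–\eqref{eq:dif2}, this should be enough to force the median of $(k_{c,b})$ to lie at distance $O(1)$ from $b^*(c)$.

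The hard part will be turning ``median is within $O(1)$ of the mode'' into ``median equals $b^*(c)$ exactly''. The Gaussian analysis only pins the median to within $\sigma \cdot o(1)$, and a more delicate calculation of the first nonvanishing skewness correction of the discrete distribution $b \mapsto e(c,b)/e(c)$ is required to decide which integer value of $b$ near the mode hosts the median. I would attempt this via the ratio $e(c,b+1)/e(c,b) = 2(c-2b+2)(c-2b+1)/((c-b)(b-1))$, expanding around $b = b^*(c)$ to one more order in $1/c$ and checking that the correction favors the median landing exactly at $b^*$ rather than $b^*\pm 1$.

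If this exact asymptotic computation proves too delicate, a fallback is an inductive attack via the recursion of Proposition \ref{prop:ecb}. Let $T^+(c) := \sum_{b > b^*(c)} e(c,b)$ and $T^-(c) := \sum_{b < b^*(c)} e(c,b)$. The recursion $e(c,b) = e(c-2,b) + 2e(c-2,b-1) + 2e(c-3,b-1)$ induces a coupled recursion for $T^+(c) - T^-(c)$ in terms of $T^\pm(c-2)$ and $T^\pm(c-3)$, with boundary corrections coming from how $b^*(c)$ shifts relative to $b^*(c-2)$ and $b^*(c-3)$ (these shifts depend only on $c \bmod 3$). Proving $T^+(c) - T^-(c) \ge -O(2^{c/2})$ uniformly in $c$ by induction on $c \bmod 6$ would give a self-contained proof, at the cost of six separate subcases. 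In either approach, the asymptotic range ($c \ge c_0$ for some explicit threshold $c_0$) is glued to the already-verified range $c \le 10{,}000$ to cover the remaining finitely many cases.
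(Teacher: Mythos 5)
This statement is Conjecture \ref{conj:median}: the paper does not prove it, and only reports a computational verification for all $c\leq 10{,}000$. So there is no proof in the paper to compare against; the only question is whether your proposal would close the conjecture, and as written it does not. Your reduction to $(e(c,b))_b$ is sound (the palindromic terms are $O(2^{c/2})$ while the relevant scale is $2^c/\sqrt{c}$), and your identification of the Gaussian profile with variance $2c/27$ centered at the mode is consistent with Theorems \ref{thm:average} and \ref{thm:variance}. But the decisive step is exactly the one you flag and defer.

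Concretely, with $L=\sum_{b<b^*}k_{c,b}$ and $R=\sum_{b>b^*}k_{c,b}$, the paper's definition of median makes the claim equivalent to $|L-R|\leq k_{c,b^*}$, i.e.\ (up to negligible palindromic corrections) $|T^+-T^-|\leq e(c,b^*)$. Both sides are $\Theta\bigl(e(c)/\sqrt{c}\bigr)$, so your estimate ``imbalance $=O(e(c)/\sqrt{c})$ versus central term $=\Theta(e(c)/\sqrt{c})$'' decides nothing: the constants are what matter. Worse, the leading symmetric Gaussian approximation actually points the wrong way for two of the three residue classes. Taking $\mu=\frac{c}{3}+\frac{11}{9}+o(1)$ from Theorem \ref{thm:average} and $\sigma^2=\frac{2c}{27}$, one gets $T^+-T^-\approx \frac{2(\mu-b^*)}{\sigma\sqrt{2\pi}}\,e(c)$ while $e(c,b^*)\approx\frac{1}{\sigma\sqrt{2\pi}}\,e(c)$, so the symmetric approximation yields the median at $b^*$ only if $\mu-b^*\leq \frac12$; but $\mu-b^*\to\frac{2}{9},\ \frac{5}{9},\ \frac{8}{9}$ for $c\equiv 0,1,2 \pmod 3$ respectively. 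Hence for $c\not\equiv 0\pmod 3$ the skewness correction is not a perturbation to be absorbed into an $O(\cdot)$ --- it is the term that determines the answer, and it must be computed exactly (an Edgeworth-type expansion of $\sum_{b>b^*}e(c,b)$ with explicit constants in each residue class, plus an explicit error bound valid for all $c$ beyond a computable threshold). Your fallback induction has the same defect: the inequality you propose to propagate, $T^+(c)-T^-(c)\geq -O(2^{c/2})$, is only the non-binding direction and lives at the wrong scale ($2^{c/2}$ rather than $2^c/\sqrt{c}$); the binding inequality $T^+-T^-\leq e(c,b^*)$ picks up boundary terms of order $e(c,b^*)$ itself from the shifts of $b^*$ under $c\mapsto c-2$ and $c\mapsto c-3$, so a crude one-sided bound will not close the induction. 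Until the skewness computation (or an equivalently sharp two-sided estimate) is carried out, the conjecture remains open.
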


\section{The average braid index of 2-bridge knots}
\label{sec:average}
In this section, we compute the average braid index of 2-bridge knots with crossing number $c$. Define the \textit{total braid index $\tbi(c)$} by 
\[\tbi(c) = \sum_{\mathbf{a}\in E(c)} \braid_c(K(\mathbf{a})) = \sum_{b=2}^{\left\lceil\frac{c+1}{2}\right\rceil} b\; e(c,b),\]
that is, $\tbi(c)$ is the sum of the braid indices of the knots coming from $E(c)$. Similarly, define the \textit{total palindromic braid index $\tbi_p(c)$} by 
\[\tbi_p(c) = \sum_{\mathbf{a}\in E_p(c)} \braid_c(K(\mathbf{a})) = \sum_{b=2}^{\left\lceil\frac{c+1}{2}\right\rceil} b\; e_p(c,b),\]
that is, $\tbi_p(c)$ is the sum of the braid indices of the knots coming from $E_p(c)$. Theorem \ref{thm:even} implies that
\[4\sum_{K\in\mathcal{K}_c}\braid_c(K) = \tbi(c) + \tbi_p(c).\]
Therefore the average braid index of 2-bridge knots of crossing number $c$ is
\[E(\braid_c) = \frac{\tbi(c)+\tbi_p(c)}{e(c)+e_p(c)}.\]

The following two propositions give recursive and closed formulas for $\tbi(c)$ and $\tbi_p(c)$. The closed formulas can be verified from the recursive formula by straightforward but tedious induction arguments (similar to the proofs of Corollaries \ref{cor:ec} and \ref{cor:ep}) that we leave to the reader.

\begin{proposition}
    \label{prop:tbi}
    If $c\geq 6$, then
    \begin{equation}
        \label{eq:tbirecur}
    \tbi(c) = 3\tbi(c-2) + 2\tbi(c-3) + 2^{c-3}.
    \end{equation}
    If $c\geq 3$, then
    \[\tbi(c) = \frac{(6c+22)2^{c-2}+(6c-46)(-1)^c}{27}.\]   
\end{proposition}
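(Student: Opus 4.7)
The plan is to derive the recursion for $\tbi(c)$ directly from the recursion for $e(c,b)$ in Proposition \ref{prop:ecb}, and then verify the closed formula by a straightforward induction using the closed form of $e(c)$ from Corollary \ref{cor:ec}. This parallels exactly the proof of Corollary \ref{cor:ec}: where that argument summed $e(c,b)$ over $b$ against the three-term identity $e(c,b) = e(c-2,b) + 2e(c-2,b-1) + 2e(c-3,b-1)$, here I weight by $b$ first.

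Starting from $\tbi(c) = \sum_b b\, e(c,b)$ and applying Proposition \ref{prop:ecb}, the first piece yields $\tbi(c-2)$ directly, while for each shifted piece the substitution $b \mapsto b+1$ gives
\[\sum_b b\, e(c-j,b-1) \;=\; \sum_{b'} (b'+1)\, e(c-j,b') \;=\; \tbi(c-j) + e(c-j)\]
for $j = 2, 3$, with no boundary correction needed since $e(c,1) = 0$ for all $c$. Collecting terms yields
\[\tbi(c) \;=\; 3\tbi(c-2) + 2\tbi(c-3) + 2\bigl(e(c-2) + e(c-3)\bigr),\]
so the recursion reduces to showing that $2(e(c-2) + e(c-3)) = 2^{c-3}$. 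Substituting Corollary \ref{cor:ec}, the $(-1)$-power terms cancel (consecutive signs have opposite parity), and the dyadic terms collapse via $\frac{4}{3}(2^{c-4} + 2^{c-5}) = \frac{4}{3}\cdot 3\cdot 2^{c-5} = 2^{c-3}$. This establishes the recursive formula.

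For the closed formula, the plan is to induct on $c$, verifying the base cases $c = 3, 4, 5$ from Example \ref{ex:e} (for instance $\tbi(3) = 2\cdot 2 = 4$, matching $\frac{(40)(2) + (-28)(-1)}{27} = \frac{108}{27}$ after simplification). For the inductive step, substitute the proposed formula for $\tbi(c-2)$ and $\tbi(c-3)$ into the recursion, use $(-1)^{c-2} = (-1)^c$ and $(-1)^{c-3} = -(-1)^c$, and collect the coefficients of $2^{c-2}$ and $(-1)^c$. The main obstacle is nothing conceptual but simply keeping careful track of signs and index shifts during the algebraic verification; given the pattern already established in Corollaries \ref{cor:ec} and \ref{cor:ep}, the induction should go through cleanly and the authors' decision to leave it to the reader is reasonable.
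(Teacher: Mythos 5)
Your proposal is correct and follows essentially the same route as the paper: sum the recursion of Proposition \ref{prop:ecb} against the weight $b$, absorb the index shifts as $\tbi(c-j)+e(c-j)$, and use Corollary \ref{cor:ec} to collapse $2(e(c-2)+e(c-3))$ to $2^{c-3}$, with the closed form then checked by induction. The algebra you sketch (sign cancellation of the $(-1)$-terms, the base case $\tbi(3)=4$) is accurate, so no further changes are needed.
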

\begin{proof}
    The total braid index $\tbi(c)$ can be expressed as
    \begin{align*}
    \tbi(c) = & \;\sum_{b=2}^{\left\lceil\frac{c+1}{2}\right\rceil}b \; e(c,b)\\
    = & \; \sum_{b=2}^{\left\lceil\frac{c+1}{2}\right\rceil} b \;e(c-2,b) + 2\sum_{b=2}^{\left\lceil\frac{c+1}{2}\right\rceil} b\; e(c-2,b-1) + 2\sum_{b=2}^{\left\lceil\frac{c+1}{2}\right\rceil} b\; e(c-3,b-1)\\
     = & \;\sum_{b=2}^{\left\lceil\frac{c+1}{2}\right\rceil} b \;e(c-2,b) + 2\left(\sum_{b=2}^{\left\lceil\frac{c+1}{2}\right\rceil} (b-1)\; e(c-2,b-1) + \sum_{b=2}^{\left\lceil\frac{c+1}{2}\right\rceil} e(c-2,b-1)\right)\\
     & \; +2\left(\sum_{b=2}^{\left\lceil\frac{c+1}{2}\right\rceil} (b-1)\; e(c-3,b-1) + \sum_{b=2}^{\left\lceil\frac{c+1}{2}\right\rceil} e(c-3,b-1)\right)\\
     = & \; \tbi(c-2) + 2\tbi(c-2) + 2e(c-2)+2\tbi(c-3)+2e(c-3)\\
     = & \; 3\tbi(c-2)+2\tbi(c-3)+2e(c-2) + 2e(c-3).
    \end{align*}
    Corollary \ref{cor:ec} implies that
    \begin{align*}
    2e(c-2)+ 2e(c-3) = & \; \frac{4\left(2^{c-4}-(-1)^{c-4}\right)}{3} + \frac{4\left(2^{c-5}-(-1)^{c-5}\right)}{3}\\
    = & \; \frac{2^{c-2} + 2^{c-3}}{3}\\
    = & \; 2^{c-3}.
    \end{align*}
    Therefore
    \[\tbi(c) = 3\tbi(c-2) + 2\tbi(c-3) + 2^{c-3},\]
    verifying Equation \ref{eq:tbirecur}. The closed formula for $\tbi(c)$ can be verified using Equation \ref{eq:tbirecur} and induction.
\end{proof}

We now give recursive and closed formulas for $\tbi_p(c)$.
\begin{proposition}
    \label{prop:tbip}
    If $c\geq 7$, then 
\begin{equation}
\label{eq:tbiprecur}
   \tbi_p(c) = \tbi_p(c-2)+2\tbi_p(c-4)+4e_p(c-4). 
\end{equation}
If $c\geq 3$, then
\[\tbi_p(c) = \begin{cases}
    \displaystyle \frac{(3c+13)2^{\frac{c}{2}}+(12c+14)(-1)^{\frac{c}{2}}}{27} & \text{if $c$ is even,}\\
   \displaystyle \frac{(6c+14)2^{\frac{c-1}{2}}-(12c+8)(-1)^{\frac{c-1}{2}}}{27}& \text{if $c$ is odd.}
    \end{cases}\]
\end{proposition}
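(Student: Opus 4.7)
The plan is to mirror the strategy of Proposition \ref{prop:tbi}, but with the recursion for $e_p(c,b)$ from Proposition \ref{prop:ep} playing the role of the recursion from Proposition \ref{prop:ecb}. First I would derive the recursive formula \eqref{eq:tbiprecur}. Expanding the defining sum and substituting the recursion $e_p(c,b) = e_p(c-2,b) + 2e_p(c-4,b-2)$ yields
\[
\tbi_p(c) = \sum_{b} b\,e_p(c,b) = \sum_b b\,e_p(c-2,b) + 2\sum_b b\,e_p(c-4,b-2).
\]
The first sum is precisely $\tbi_p(c-2)$. For the second, the substitution $b' = b-2$ gives $\sum_{b'} (b'+2)\,e_p(c-4,b') = \tbi_p(c-4) + 2e_p(c-4)$, so doubling produces the contribution $2\tbi_p(c-4) + 4e_p(c-4)$. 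Combining these terms is exactly \eqref{eq:tbiprecur}.

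To establish the closed formula I would argue by strong induction on $c$. Using Example \ref{ex:ep} I would verify the base cases $3 \leq c \leq 6$ directly: $\tbi_p(3) = 2\cdot 2 = 4$, $\tbi_p(4) = 3\cdot 2 = 6$, $\tbi_p(5) = 2\cdot 2 = 4$, and $\tbi_p(6) = 3\cdot 2 = 6$, each of which agrees with the proposed parity-split formula. For the inductive step with $c \geq 7$, I would substitute the inductive values of $\tbi_p(c-2)$ and $\tbi_p(c-4)$ together with the explicit expression for $e_p(c-4)$ supplied by Corollary \ref{cor:ep} into \eqref{eq:tbiprecur}. The parities of $c$, $c-2$, and $c-4$ all coincide, so within the even case every contribution carries the common exponential factor $2^{c/2}$ and the common sign $(-1)^{c/2}$, and analogously in the odd case, which lets me collect like terms cleanly.

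The main obstacle is purely computational bookkeeping: verifying that the linear-in-$c$ coefficients of $2^{c/2}$ and $(-1)^{c/2}$ in the even case reassemble to $(3c+13)$ and $(12c+14)$, and that the coefficients of $2^{(c-1)/2}$ and $(-1)^{(c-1)/2}$ in the odd case reassemble to $(6c+14)$ and $-(12c+8)$. I expect no conceptual difficulty here because the shift $c \mapsto c-2$ inside $\tbi_p(c-2)$ contributes a $(3c+13) - 6 = 3c+7$ style coefficient that must absorb a matching $6$ from the $2\tbi_p(c-4)$ and $4e_p(c-4)$ pieces; the common denominator $27$ is forced by the $3$ already appearing in Corollary \ref{cor:ep}. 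Once the even case is carried out in detail, the odd case follows by the identical template, finishing the induction.
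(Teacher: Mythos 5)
Your proposal is correct and follows essentially the same route as the paper: the recursion \eqref{eq:tbiprecur} is derived by substituting the recursion of Proposition \ref{prop:ep} into the defining sum and reindexing via $b\mapsto b-2$, and the closed formula is then verified by induction from the base cases in Example \ref{ex:ep} together with Corollary \ref{cor:ep} (the paper explicitly leaves this induction to the reader). Your base-case values $\tbi_p(3)=4$, $\tbi_p(4)=6$, $\tbi_p(5)=4$, $\tbi_p(6)=6$ and the coefficient bookkeeping all check out.
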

\begin{proof}
    The total palindromic braid index can be expressed as 
    \begin{align*}
    \tbi_p(c) = & \; \sum_{b=2}^{\left\lceil\frac{c+1}{2}\right\rceil} b\; e_p(c,b)\\
    = & \; \sum_{b=2}^{\left\lceil\frac{c+1}{2}\right\rceil} b\; e_p(c-2,b) + 2\left(\sum_{b=2}^{\left\lceil\frac{c+1}{2}\right\rceil} b\; e_p(c-4,b-2)\right)\\
    = & \; \sum_{b=2}^{\left\lceil\frac{c+1}{2}\right\rceil} b\; e_p(c-2,b) + 2\left(\sum_{b=2}^{\left\lceil\frac{c+1}{2}\right\rceil} (b-2)\; e_p(c-4,b-2) + 2 \sum_{b=2}^{\left\lceil\frac{c+1}{2}\right\rceil} e_p(c-4,b-2)\right)\\
    = & \; \tbi_p(c-2)+2\tbi_p(c-4) + 4e_p(c-4),
\end{align*}
verifying Equation \ref{eq:tbiprecur}. The closed formula for $\tbi_p(c)$ can be verified using Corollary \ref{cor:ep}, Equation \eqref{eq:tbiprecur}, and induction.
\end{proof}

Propositions \ref{prop:tbi} and \ref{prop:tbip} lead to our computation of the average braid index $E(\braid_c)$ of 2-bridge knots with crossing number $c$,  proving Theorem \ref{thm:average}.
\begin{proof}[Proof of Theorem \ref{thm:average}]
Theorem \ref{thm:even} implies that each $2$-bridge knot with crossing number $c$ is represented by either two or four elements in $E(c)$ depending on whether the corresponding elements are (anti-)palindromic or not, respectively. Thus it follows that the average braid index is
\[E(\braid_c) = \frac{\tbi(c) + \tbi_p(c)}{e(c)+e_p(c)}.\]
The result follows from Corollaries \ref{cor:ec} and \ref{cor:ep} and Propositions \ref{prop:tbi} and \ref{prop:tbip}.
\end{proof}

\section{The variance of the braid indices of 2-bridge knots}
\label{sec:variance}

In this section, we compute the variance of the braid indices of $2$-bridge knots with crossing number $c$. The variance can be expressed as $\var(\braid_c) = E(\braid_c^2) - E(\braid_c)^2$, and hence we make the following definitions. Define the \textit{total square braid index} $\tbi^2(c)$ to be the sum of the squares of the braid indices of $K(\mathbf{a})$ where $\mathbf{a}$ ranges over $E(c)$, that is,
\[\tbi^2(c) = \sum_{\mathbf{a}\in E(c)} \braid_c(K(\mathbf{a}))^2 = \sum_{b=2}^{\left\lceil\frac{c+1}{2}\right\rceil} b^2 \; e(c,b).\]
Similarly, define the \textit{total palindromic square braid index} $\tbi_p^2(c)$ to be the sum of the squares of the braid indices of $K(\mathbf{a})$ where $\mathbf{a}$ ranges over $E_p(c)$, that is,
\[\tbi_p^2(c) = \sum_{\mathbf{a}\in E_p(c)}\braid_c(K(\mathbf{a}))^2 =\sum_{b=2}^{\left\lceil\frac{c+1}{2}\right\rceil} b^2 \; e_p(c,b). \]
Theorem \ref{thm:even} implies that $E(\braid_c^2) = \frac{\tbi^2(c)+\tbi_p^2(c)}{e(c)+e_p(c)}$. 

The following two propositions give recursive and closed formulas for $\tbi^2(c)$ and $\tbi_p^2(c)$. As in Section \ref{sec:average}, the closed formulas can be verified from the recursive formula by straightforward but tedious induction arguments that we leave to the reader.
\begin{proposition}
    \label{prop:tbi2}
    If $c\geq 6$, then
\begin{align}
\label{eq:tbi2}
\begin{split}
    \tbi^2(c) = & \; 3\tbi^2(c-2) + 2\tbi^2(c-3) +4\tbi(c-2) + 4\tbi(c-3) + 2e(c-2) +2e(c-3)\\
    = & \; 3\tbi^2(c-2) + 2\tbi^2(c-3) + \frac{(6c+17)2^{c-3} + 8(-1)^c}{9}.
\end{split}
\end{align}
If $c\geq 3$, then
\[\tbi^2(c) = \frac{(3c^2+24c+37)2^{c-1}+(12c^2+30c-302)(-1)^c}{81}.\]
\end{proposition}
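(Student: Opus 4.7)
The plan is to mirror the proof of Proposition \ref{prop:tbi}, using the recursion for $e(c,b)$ from Proposition \ref{prop:ecb} together with the closed formulas for $\tbi(c)$ (Proposition \ref{prop:tbi}) and $e(c)$ (Corollary \ref{cor:ec}). First, I would start from
\[\tbi^2(c) = \sum_{b=2}^{\left\lceil\frac{c+1}{2}\right\rceil} b^2\, e(c,b) = \sum_{b} b^2 \bigl(e(c-2,b) + 2e(c-2,b-1) + 2e(c-3,b-1)\bigr),\]
where the substitution comes from Equation \eqref{eq:ecbrecur}. The first summand contributes $\tbi^2(c-2)$ directly. For the shifted summands, I would use the identity $b^2 = (b-1)^2 + 2(b-1) + 1$ so that, after reindexing,
\[2\sum_b b^2 \, e(c-2,b-1) = 2\tbi^2(c-2) + 4\tbi(c-2) + 2e(c-2),\]
and the analogous identity with $c-3$ in place of $c-2$. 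Collecting these produces the first form of the recursion in Equation \eqref{eq:tbi2}.

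Next, I would eliminate the lower-order terms. Plugging $\tbi(c-2)$ and $\tbi(c-3)$ into $4\tbi(c-2)+4\tbi(c-3)$ and using $(-1)^{c-2}=(-1)^c$ and $(-1)^{c-3}=-(-1)^c$, the $(-1)^c$ coefficients combine as $4(6c-58)-4(6c-64)=24$, and the $2^{\bullet}$ terms combine as $(12c+20)2^{c-3}+(6c+4)2^{c-3}=(18c+24)2^{c-3}$. Similarly, $2e(c-2)+2e(c-3)$ simplifies to $2^{c-3}$ (this is the same simplification that appears at the end of the proof of Proposition \ref{prop:tbi}). Adding $27\cdot 2^{c-3}$ to $(18c+24)2^{c-3}+24(-1)^c$ and dividing by $27$ gives $\frac{(6c+17)2^{c-3}+8(-1)^c}{9}$, which is the second form of the recursion.

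Finally, I would verify the closed formula by induction on $c$, checking base cases $c=3,4,5$ from Table \ref{table:ecb} and using Equation \eqref{eq:tbi2} for the inductive step. The bookkeeping splits naturally according to the parity of $c$: in each case, the inductive hypothesis expresses $3\tbi^2(c-2)+2\tbi^2(c-3)$ as
\[\frac{3(3(c-2)^2+24(c-2)+37)2^{c-3}+3(12(c-2)^2+30(c-2)-302)(-1)^{c}}{81}+\frac{2(3(c-3)^2+24(c-3)+37)2^{c-4}-2(12(c-3)^2+30(c-3)-302)(-1)^{c}}{81},\]
and one collects coefficients of $2^{c-1}$ (equivalently $2^{c-3}$) and $(-1)^c$ separately, adding $\frac{(6c+17)2^{c-3}+8(-1)^c}{9}$ and checking that the coefficient of $2^{c-1}$ matches $3c^2+24c+37$ and the coefficient of $(-1)^c$ matches $12c^2+30c-302$.

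The main obstacle is purely bookkeeping: the inductive step requires collecting quadratic-in-$c$ polynomial coefficients of $2^{c-1}$ and $(-1)^c$ from three distinct summands, and the sign alternation in the $(-1)^c$ terms is the easiest place to make an arithmetic slip. Because this is tedious but mechanical, I would present the derivation of the recursion in detail and simply state that the closed formula follows by induction, as the authors indicate in the paragraph preceding the proposition.
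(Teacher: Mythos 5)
Your proposal is correct and follows essentially the same route as the paper: substitute the recursion for $e(c,b)$ from Proposition \ref{prop:ecb}, apply $b^2=(b-1)^2+2(b-1)+1$ to reindex the shifted sums, simplify the lower-order terms via Proposition \ref{prop:tbi} and Corollary \ref{cor:ec}, and verify the closed formula by induction. Your explicit arithmetic for collapsing $4\tbi(c-2)+4\tbi(c-3)+2e(c-2)+2e(c-3)$ into $\frac{(6c+17)2^{c-3}+8(-1)^c}{9}$ checks out, and your treatment of the induction matches the level of detail the authors themselves adopt.
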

\begin{proof}
     If $c\geq 6$, then the total square braid index can be expressed as
 \begin{align*}
     \tbi^2(c) = & \; \sum_{b=2}^{\left\lceil\frac{c+1}{2}\right\rceil} b^2 \; e(c,b)\\
     = & \; \sum_{b=2}^{\left\lceil\frac{c+1}{2}\right\rceil} b^2(e(c-2,b) + 2e(c-2,b-1) + 2e(c-3,b-1))\\
     = & \; \sum_{b=2}^{\left\lceil\frac{c+1}{2}\right\rceil} b^2\; e(c-2,b) + 2\sum_{b=2}^{\left\lceil\frac{c+1}{2}\right\rceil} b^2\; e(c-2,b-1) + 2\sum_{b=2}^{\left\lceil\frac{c+1}{2}\right\rceil} b^2 \; e(c-3,b-1).\\
 \end{align*}
 Because $b^2 = (b-1)^2 + 2(b-1)+1$, it follows that
 \begin{align*}
     \sum_{b=2}^{\left\lceil\frac{c+1}{2}\right\rceil} b^2\; e(c-2,b-1) = & \; \sum_{b=2}^{\left\lceil\frac{c+1}{2}\right\rceil} (b-1)^2 e(c-2,b-1) + 2\sum_{b=2}^{\left\lceil\frac{c+1}{2}\right\rceil} (b-1) e(c-2,b-1) + \sum_{b=2}^{\left\lceil\frac{c+1}{2}\right\rceil} e(c-2,b-1)\\
     = & \; \tbi^2(c-2) + 2\tbi(c-2) + e(c-2).
    \end{align*}
Similarly,
\[\sum_{b=2}^{\left\lceil\frac{c+1}{2}\right\rceil} b^2 \; e(c-3,b-1) = \tbi^2(c-3) + 2\tbi(c-3) + e(c-3).\]
Therefore
\[ \tbi^2(c) = 3\tbi^2(c-2) + 2\tbi^2(c-3) +4\tbi(c-2) + 4\tbi(c-3) + 2e(c-2) +2e(c-3).\]
Hence Corollary \ref{cor:ec} and Proposition \ref{prop:tbi} imply that
\[\tbi^2(c) = 3\tbi^2(c-2) + 2\tbi^2(c-3) + \frac{(6c+17)2^{c-3} + 8(-1)^c}{9},\]
proving Equation \eqref{eq:tbi2}. The closed formula for $\tbi^2(c)$ can be verified using Equation \eqref{eq:tbi2} and induction. 
\end{proof}
The next proposition gives recursive and closed formulas for $\tbi_p^2(c)$.
\begin{proposition}
\label{prop:tbip2}
     If $c\geq 7$, then
    \begin{equation}
    \label{eq:tbip2}    
    \tbi_p^2(c) = \tbi_p^2(c-2) + 2\tbi_p^2(c-4)+8\tbi_p(c-4)+8e_p(c-4).
    \end{equation}
    If $c\geq 3$, then
    \[\tbi_p^2(c) = \begin{cases}
  \displaystyle  \frac{(6c^2+36c+14)2^{\frac{c-1}{2}}-(24c^2+24c+8)(-1)^{\frac{c-1}{2}}}{81}&\text{if $c$ is odd,}\\
   \displaystyle \frac{(3c^2+30c+43)2^{\frac{c}{2}}+(24c^2+48c+38)(-1)^{\frac{c}{2}}}{81}& \text{if $c$ is even.}
\end{cases}\]
\end{proposition}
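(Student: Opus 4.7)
The plan is to mirror the proof of Proposition \ref{prop:tbi2}, but to work with the recursion from Proposition \ref{prop:ep} instead of Proposition \ref{prop:ecb}. Starting from the definition
\[\tbi_p^2(c) = \sum_{b=2}^{\left\lceil\frac{c+1}{2}\right\rceil} b^2 \, e_p(c,b),\]
I would substitute the recursive formula $e_p(c,b) = e_p(c-2,b) + 2e_p(c-4,b-2)$ and split the resulting sum into two pieces. The first piece is immediately $\tbi_p^2(c-2)$. For the second piece, the key algebraic manipulation is to write $b^2 = (b-2)^2 + 4(b-2) + 4$, so that after re-indexing via $b \mapsto b-2$ the sum $\sum_{b} b^2 \, e_p(c-4,b-2)$ becomes $\tbi_p^2(c-4) + 4\tbi_p(c-4) + 4e_p(c-4)$. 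Combining these yields
\[\tbi_p^2(c) = \tbi_p^2(c-2) + 2\tbi_p^2(c-4) + 8\tbi_p(c-4) + 8e_p(c-4),\]
which is Equation \eqref{eq:tbip2}.

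For the closed formula, I would proceed by induction on $c$. The base cases $3 \leq c \leq 6$ can be verified directly from Example \ref{ex:ep}, since $\tbi_p^2(c) = \sum_b b^2 \, e_p(c,b)$ involves only finitely many nonzero terms for small $c$. For the inductive step, substitute the inductive hypotheses for $\tbi_p^2(c-2)$ and $\tbi_p^2(c-4)$ along with the closed-form expressions for $\tbi_p(c-4)$ from Proposition \ref{prop:tbip} and $e_p(c-4)$ from Corollary \ref{cor:ep} into Equation \eqref{eq:tbip2}, and simplify.

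The main technical obstacle, as with Proposition \ref{prop:tbi2}, is bookkeeping rather than mathematical depth: the closed formula depends on the parity of $c$, and so the induction must be handled in two cases. In the odd-$c$ case, $\tbi_p^2(c-2)$ and $\tbi_p^2(c-4)$ use the ``$c$ odd'' branch of the proposed formula, while $\tbi_p(c-4)$ and $e_p(c-4)$ use their corresponding odd branches; in the even-$c$ case, the analogous even branches appear. One then needs to verify that the coefficient of $2^{(c-1)/2}$ (respectively $2^{c/2}$) produced by combining these four terms matches $(6c^2+36c+14)/81$ (respectively $(3c^2+30c+43)/81$), and similarly for the alternating-sign term. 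These are routine polynomial identities in $c$ once the shifts $c \mapsto c-2, c-4$ are tracked carefully, so I would present them as a straightforward but tedious induction argument left to the reader, just as the authors did for Propositions \ref{prop:tbi} and \ref{prop:tbip}.
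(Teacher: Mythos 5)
Your proposal matches the paper's own argument essentially verbatim: the recursion is derived by substituting $e_p(c,b) = e_p(c-2,b) + 2e_p(c-4,b-2)$ into the defining sum and expanding $b^2 = (b-2)^2 + 4(b-2) + 4$ to produce the $\tbi_p^2(c-4)$, $\tbi_p(c-4)$, and $e_p(c-4)$ terms, and the closed formula is then left as a routine induction from the base cases $3 \leq c \leq 6$. This is correct and is the same approach the authors take.
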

\begin{proof}
    If $c\geq 7$, then the total palindromic square braid index can be expressed as
    \begin{align*}
        \tbi_p^2(c) = &\; \sum_{b=2}^{\left\lceil\frac{c+1}{2}\right\rceil} b^2 \; e_p(c,b)\\
        = &\; \sum_{b=2}^{\left\lceil\frac{c+1}{2}\right\rceil} b^2 e_p(c-2,b) + 2\sum_{b=2}^{\left\lceil\frac{c+1}{2}\right\rceil} b^2 e_p(c-4,b-2)\\
        = & \; \tbi_p^2(c-2) + 2\sum_{b=2}^{\left\lceil\frac{c+1}{2}\right\rceil} ((b-2)^2+4(b-2)+4) e_p(c-4,b-2)\\
        = & \; \tbi_p^2(c-2) +2\sum_{b=2}^{\left\lceil\frac{c+1}{2}\right\rceil} (b-2)^2 e_p(c-4,b-2)\\
        & \; + 8\sum_{b=2}^{\left\lceil\frac{c+1}{2}\right\rceil} (b-2) e_p(c-4,b-2) + 8 \sum_{b=2}^{\left\lceil\frac{c+1}{2}\right\rceil} e_p(c-4,b-2)\\
       = & \; \tbi_p^2(c-2) + 2\tbi_p^2(c-4)+8\tbi_p(c-4)+8e_p(c-4),
    \end{align*}
    proving Equation \eqref{eq:tbip2}. The closed formula for $\tbi_p^2(c)$ can be verified using Corollary \ref{cor:ep}, Proposition \ref{prop:tbip}, and induction.
\end{proof}
The closed formulas for $\tbi^2(c)$ and $\tbi_p^2(c)$ in Propositions \ref{prop:tbi2} and \ref{prop:tbip2} lead to our computation of $\var(\braid_c)$, proving Theorem \ref{thm:variance} and concluding the paper.
\begin{proof}[Proof of Theorem \ref{thm:variance}]
The variance $\var(\braid_c)$ of the braid indices of 2-bridge knots with crossing number $c$ satisfies
\[\var(\braid_c) = E(\braid_c^2) - E(\braid_c)^2 = \frac{\tbi^2(c)+\tbi_p^2(c)}{e(c)+e_p(c)} +\left(\frac{\tbi(c)+\tbi_p(c)}{e(c)+e_p(c)}\right)^2.\]
Theorem \ref{thm:average}, Corollaries \ref{cor:ec} and \ref{cor:ep}, and Propositions \ref{prop:tbi2} and \ref{prop:tbip2} imply the result. 
\end{proof}

\bibliographystyle{amsalpha}
\bibliography{Braid}

\end{document}